\newcommand{\thorn}{{\fontencoding{T1}\selectfont\th}}
\newcommand{\@indepsymbol}[2]{#1\setbox0=\hbox{$#1x$}\kern\wd0\hbox to 0pt{\hss$#1\mid$\hss}\lower.9\ht0\hbox to 0pt{\hss$#1\smile$\hss}\kern\wd0}
\newcommand{\@nindepsymbol}[2]{#1\setbox0=\hbox{$#1x$}\kern\wd0\hbox to 0pt{\mathchardef
	\nn=12854\hss$#1\nn$\kern1.4\wd0\hss}\hbox to
	0pt{\hss$#1\mid$\hss}\lower.9\ht0 \hbox to
	0pt{\hss$#1\smile$\hss}\kern\wd0}
\newcommand{\ind}[1][]{\mathop{\mathpalette\@indepsymbol{}^{\!\!\!\!\rlap{$\scriptstyle\textnormal{#1}$}\,\,\,\,}}}
\newcommand{\nind}[1][]{\mathop{\mathpalette\@nindepsymbol{}^{\!\!\!\rlap{$\scriptstyle\textnormal{#1}$}\,\,\,}}}
\newcommand{\@Ind}[1][]{\mathpalette\@indepsymbol{}^{\!\!\!\!\mbox{$\scriptstyle\textnormal{#1}$}}}
\newcommand{\Ind}[1][]{\@Ind[\ \,]}
\newcommand{\newind}[4]{
	\newcommand{#1}{{\!\@Ind[#4]}}
	\newcommand{#2}{\ind[#4]}
	\newcommand{#3}{\nind[#4]}
}
\newind{\thInd}{\thind}{\nthind}{\thorn}
\renewcommand{\thInd}{\text{$\@Ind[\thorn]$\;}}
\newtheorem{thm}{Theorem}[section]
\newtheorem{lem}[thm]{Lemma}
\newtheorem{prop}[thm]{Proposition}
\theoremstyle{definition}
\theoremstyle{remark}
\theoremstyle{remark}
\theoremstyle{remark}
\newcommand {\Q} {\mathbb{Q}}
\renewcommand {\epsilon}{\varepsilon}
\title{On algebraic values of Weierstrass $\sigma$-functions}
\date{\today}
\author{Gareth Boxall}
\thanks{This work is based on the research supported in part by the National Research Foundation of South Africa (Grant Number 96234). The second author thanks the National Research Foundation of South Africa for funding for his doctoral studies. The third author is grateful to the Engineering and Physical Sciences Research Council for support under grant EP/N007956/1.}
\address{Gareth Boxall, Mathematics Division, Department of Mathematical Sciences, 
Stellenbosch University, Matieland 7602, South Africa; gboxall@sun.ac.za}
\author{Taboka Chalebgwa}
\address{Taboka Chalebgwa, Department of Mathematics and Statistics, McMaster University, Hamilton, Ontario, 8LS 4K1, Canada and Mathematics Division, Department of Mathematical Sciences, 
Stellenbosch University, Matieland 7602, South Africa; chalebgt@mcmaster.ca}
\author{Gareth Jones}
\address{Gareth Jones, School of Mathematics, University of Manchester, Oxford Road, Manchester, M13 9PL, UK; gareth.jones-3@manchester.ac.uk} 
\begin{document}
\maketitle

\begin{abstract}
Suppose that $\Omega$ is a lattice in the complex plane and let $\sigma$ be the corresponding Weierstrass $\sigma$-function. Assume that the point $\tau$ associated to $\Omega$ in the standard fundamental domain has imaginary part at most 1.9. Assuming that $\Omega$ has algebraic invariants $g_2,g_3$ we show that a bound of the form $c d^m (\log H)^n$ holds for the number of algebraic points of height at most $H$ and degree at most $d$ lying on the graph of $\sigma$. To prove this we apply results by Masser and Besson. What is perhaps surprising is that we are able to establish such a bound for the whole graph, rather than some restriction. We prove a similar result when, instead of $g_2,g_3$, the lattice points are algebraic. For this we naturally exclude those $(z,\sigma(z))$ for which $z\in\Omega$. 

\end{abstract}

\section{Introduction}

Let $\Omega$ be a lattice in $\mathbb{C}$. We are interested in the Weierstrass $\sigma$-function associated with $\Omega$. This is defined as $$\sigma_\Omega(z)=z\prod\limits_{\omega\in \Omega^*}\left(1-\dfrac{z}{\omega}\right)\exp\left(\dfrac{z}{\omega}+\dfrac{z^2}{2\omega^2}\right)$$ where $\Omega^*=\Omega\setminus\{0\}$. 

By definition, $\Omega=\omega_1\mathbb{Z}+\omega_2\mathbb{Z}$ where $\omega_1$ and $\omega_2$ are $\mathbb{R}$-linearly independent. It is known that $\omega_1$ and $\omega_2$ may be chosen so that $\frac{\omega_2}{\omega_1}$ lies in the upper half plane and has modulus at least 1 and real part in the interval $\left[-\frac{1}{2},\frac{1}{2}\right]$. Fix $\omega_1$ and $\omega_2$ with this property and set $\tau=\frac{\omega_2}{\omega_1}$. We prove two results concerning the number of algebraic points of bounded height and degree on the graph of $\sigma_\Omega$, under certain assumptions including that the imaginary part of $\tau$ is at most $1.9$. This bound on $\text{Im}(\tau)$ ensures that a useful growth condition is satisfied. Given an algebraic number $z$, we write $H(z)$ for its multiplicative height. For a pair $(z,w)$, $H(z,w)=\max\{H(z),H(w)\}$.

\begin{thm}\label{alglattice}
Suppose $\text{Im}(\tau)\leq 1.9$. If $\omega_1$ and $\omega_2$ are both algebraic then there is an effective positive constant $c$ such that, for all $d\geq e$ and $H\geq e^e$, there are at most $$c d^6(\log d)(\log H)^{2}\log\log H$$ algebraic points $(z,\sigma_\Omega(z))$ such that $[\mathbb{Q}(z,\sigma_\Omega(z)):\mathbb{Q}]\leq d$, $H(z,\sigma_\Omega(z))\leq H$ and $z\notin \Omega$.
\end{thm}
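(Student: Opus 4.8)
The plan is to use the Pila--Wilkie counting theorem in a definable (in fact, restricted-analytic-plus-exponential) structure, combined with a zero-estimate and a lemma of Masser on the number of algebraic points of an analytic function, supplemented by a transcendence-type input bounding how often $\sigma_\Omega$ and a suitable auxiliary function can share an algebraic value. More precisely, since $\omega_1,\omega_2$ are algebraic, the classical quasi-periods $\eta_1,\eta_2$ and the invariants $g_2,g_3$ carry the relevant arithmetic, and the Legendre relation $\eta_1\omega_2-\eta_2\omega_1=2\pi i$ will be needed to control constants. I would first reduce to a fundamental domain: the quasi-periodicity $\sigma_\Omega(z+\omega_i)=\pm\exp(\eta_i(z+\omega_i/2))\sigma_\Omega(z)$ lets one transport an algebraic point $(z,\sigma_\Omega(z))$ with $z$ outside a bounded region to one with $z$ in a fixed period parallelogram, at the cost of multiplying heights by a controlled amount and increasing the degree by a bounded factor (absorbing $\exp(\eta_i(\cdots))$, which need not be algebraic, is the subtle point and is presumably where the results of Besson enter).

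Next, on the fixed fundamental domain $F$ the function $\sigma_\Omega$ is given by a globally convergent product and is definable in $\R_{\mathrm{an},\exp}$ after a suitable reparametrisation; restricting further to a compact subset and removing small neighbourhoods of the lattice point $0$, one gets a function whose graph is a compact definable set of dimension $1$. I would then apply the sharp counting result for definable curves: the number of algebraic points of height $\le H$ and degree $\le d$ on a transcendental definable curve in the plane is $O(d^{?}(\log H)^{?})$ with the stated shape $c\,d^{6}(\log d)(\log H)^{2}\log\log H$, this precise exponent package being exactly what one obtains from the Besson-type refinement of Pila's bounds for curves (the $\log\log H$ and the $d^{6}\log d$ coming from the parametrisation/derivative estimates for $\sigma$, whose growth is controlled by the hypothesis $\mathrm{Im}(\tau)\le 1.9$). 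The algebraic part of the graph must be shown to be empty or finite: this is where a Weierstrass-$\sigma$ transcendence statement is used, namely that $\sigma_\Omega$ is not algebraic over $\C(z)$ (indeed its graph contains no semialgebraic arc), so the whole curve is its transcendental part.

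The main obstacle, and the step I would expect to consume most of the work, is the reduction from the whole graph to a bounded piece while keeping the height and degree bounds polynomial/logarithmic. Unlike the restricted analytic situation, here one genuinely wants \emph{all} algebraic points on the infinite graph, so one must understand arithmetic of the values $\sigma_\Omega(z+\omega)$ for $z$ in a fundamental cell and $\omega\in\Omega$ ranging over a lattice with $|\omega|\le$ (something like $H^{O(1)}$): the quasi-period factors $\exp(\eta_i z + \text{const})$ contribute heights growing like $\exp(c|z|^2)$, so a naive tiling would destroy the logarithmic dependence on $H$. The resolution --- and the reason the hypothesis $\mathrm{Im}(\tau)\le 1.9$ appears --- is that an algebraic point of height $\le H$ forces $|z|$ to be small (of size $O(\log H)$ or so) via Masser's estimates on the growth of $\sigma_\Omega$ together with a lower bound for $|\sigma_\Omega(z)|$ away from the lattice, so effectively only finitely many translates (polynomially many in $\log H$) are relevant; one then applies the counting theorem on each, and the logarithmic factors multiply up to give the $(\log H)^2\log\log H$ in the statement. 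I would organise the proof as: (i) set up the definability and growth lemmas for $\sigma_\Omega$ on vertical strips (using Masser); (ii) prove the confinement lemma forcing algebraic points of bounded height into a region of radius $O(\sqrt{\log H})$; (iii) tile that region by $O(\log H)$ fundamental cells and on each transport to $F$, tracking height and degree via Besson's explicit bounds; (iv) on $F$ apply the curve-counting theorem to the (transcendental) graph of $\sigma_\Omega$; (v) sum the contributions and simplify to the claimed bound, checking effectivity of all constants along the way.
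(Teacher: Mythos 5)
Your step (ii) --- the confinement lemma --- is the right key idea and matches the paper (Proposition \ref{applgrowth}, which gives $|z|\le Ad\sqrt{\log H}$ via the growth estimate of Proposition \ref{growth} together with the lower bound $\log|z_0|\ge -c d^2\log H$ coming from the height and degree of $z_0=z-\omega$, which is where the algebraicity of $\omega_1,\omega_2$ and $z\notin\Omega$ are used). But your steps (iii)--(iv) have two genuine gaps.

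First, the transport to the fundamental domain does not work. If $z=z_0+m\omega_1+n\omega_2$ with $z_0$ in the fundamental cell, then $\sigma_\Omega(z_0)=\pm\,\sigma_\Omega(z)\exp(-(m\eta_1+n\eta_2)(z_0+\tfrac{m}{2}\omega_1+\tfrac{n}{2}\omega_2))$, and the exponential factor is in general transcendental. So the transported point $(z_0,\sigma_\Omega(z_0))$ is typically not an algebraic point at all, and there is no way to ``track height and degree via Besson's explicit bounds'' --- there simply is no height or degree to track. You flag this as ``the subtle point'' but do not resolve it; it is fatal for the tiling-and-transport plan. The paper sidesteps the problem entirely: it never transports. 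Having confined $z$ to a disk of radius $Z=4Ad\sqrt{\log H}$, it applies Masser's Proposition~\ref{propn2} with $f_1(z)=z$, $f_2(z)=\sigma_\Omega(z)$, $M=c_1e^{c_2Z^2}$ (the growth of $\sigma_\Omega$ on that disk is explicitly controlled) and $T\sim d^3\log H$, obtaining a single auxiliary polynomial vanishing at all the points; then it applies Besson's zero estimate (Theorem~\ref{Besson_zero}) with $L=T$ and $R=Z$.

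Second, the counting theorem you propose to invoke on $F$ does not exist. The o-minimal Pila--Wilkie machinery for a curve definable in $\R_{\mathrm{an},\exp}$ gives a bound of the form $c(\epsilon)H^\epsilon$ for rational (or bounded-degree algebraic) points on the transcendental part, not a bound of the shape $c\,d^6(\log d)(\log H)^2\log\log H$. Bounds polynomial in $\log H$ require extra structure beyond definability --- a zero estimate or a pfaffian-type differential equation --- and that is exactly what Besson's Theorem~\ref{Besson_zero} supplies here. The stated exponent package $d^6(\log d)(\log H)^2\log\log H$ falls out of plugging $T\sim d^3\log H$ and $R\sim d\sqrt{\log H}$ into $cL(R+\sqrt L)^2\log(R+L)$, not out of any definable-curve counting result. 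So the Pila--Wilkie/definability framing is a wrong turn for this theorem, even though the confinement step you identified is the genuine crux.
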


For our second result, we consider the parameters $$g_2=60\sum\limits_{\omega\in\Omega^*}\omega^{-4}\text{ and } g_3=140\sum\limits_{\omega\in\Omega^*}\omega^{-6}$$ associated with $\Omega$.

\begin{thm}\label{alggs}
Suppose $\text{Im}(\tau) \leq 1.9$. If $g_2$ and $g_3$ are both algebraic then there is an effective positive constant $c$ such that, for all $d\geq e$ and $H\geq e^e$, there are at most 
$$c d^{20}(\log d)^5 (\log H)^2 \log \log H$$
algebraic points $(z,\sigma_\Omega(z))$ such that $[\mathbb{Q}(z,\sigma_\Omega(z)):\mathbb{Q}]\leq d$ and $H(z,\sigma_\Omega(z))\leq H$. 
\end{thm}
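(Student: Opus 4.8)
The plan is to combine the precise growth of $\sigma_\Omega$ with a quantitative determinant-method counting lemma of Masser and Besson; what makes the statement hold for the \emph{whole} graph is that the height condition confines the relevant $z$ to a disc of radius $O(d\sqrt{\log H})$, on which $\sigma_\Omega$ is of size $H^{O(d^2)}$, so that the counting lemma applies directly. Expressing $\sigma_\Omega$ through the associated Jacobi theta function one obtains, for all $z\in\C$,
\[
\log|\sigma_\Omega(z)| \;=\; Q(z) \;+\; \log\bigl(\operatorname{dist}(z,\Omega)\bigr) \;+\; O(1),
\]
where $Q$ is a real quadratic form in $(\operatorname{Re}z,\operatorname{Im}z)$ depending on $\Omega$, the $O(1)$ being uniform once $\operatorname{dist}(z,\Omega)$ is bounded away from $0$ and the logarithmic term accounting for the zeros. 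The hypothesis $\operatorname{Im}(\tau)\le 1.9$ is (a safe form of) the condition that $Q$ is positive definite — positivity fails, for instance along the line $\R\omega_2$, once $\operatorname{Im}(\tau)$ exceeds $6/\pi$ — so that $Q(z)\ge c_\Omega|z|^2$ for an effective $c_\Omega>0$, while trivially $Q(z)\le c_\Omega'|z|^2$ for some $c_\Omega'$. This is the ``useful growth condition''.

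Next comes the localization. Suppose $(z,\sigma_\Omega(z))$ has degree $\le d$ and height $\le H$. An algebraic number of degree $\le d$ and height $\le H$ has absolute value $\le H^d$, and $\ge H^{-d}$ if nonzero; hence $|\sigma_\Omega(z)|\le H^d$ always, and $|\sigma_\Omega(z)|\ge H^{-d}$ if $z\notin\Omega$. If $\operatorname{dist}(z,\Omega)\ge\tfrac12$ then the growth estimate gives $c_\Omega|z|^2\le d\log H+O(1)$, so $|z|=O(\sqrt{d\log H})$. If $z$ lies within $\tfrac12$ of a lattice point $\omega$: for $\omega=0$ this says $|z|\le\tfrac12$; for $\omega\ne0$ the period $\omega$ is transcendental (Schneider, since $g_2,g_3\in\overline{\Q}$), so $z\ne\omega$, and the two bounds on $|\sigma_\Omega(z)|$ together with the growth estimate force $|z-\omega|$ into an annulus of outer radius $\exp\bigl(d\log H-c_\Omega|\omega|^2+O(1)\bigr)$. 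Here an effective measure of transcendence for the period $\omega=m\omega_1+n\omega_2$ — a result of the kind proved by Masser — bounds $|z-\omega|$ below by $\exp(-E)$ with $E$ polynomial in $d$, essentially linear in $\log H$, and logarithmic in $\max(2,|m|,|n|)$; comparing the two estimates on $|z-\omega|$ bounds $|\omega|$ by $\operatorname{poly}(d)\cdot(\log H)^{1/2}$ up to logarithmic factors, so that at most $\operatorname{poly}(d)\cdot\log H$ (up to such factors) lattice points $\omega$ are relevant. Moreover distinct algebraic numbers of degree $\le d$ and height $\le H$ are $\ge H^{-O(d^2)}$ apart, so once $|\omega|$ exceeds a threshold $\Omega_1=O(d\sqrt{\log H})$ the annulus around $\omega$ has diameter below this gap and contains at most one such $z$. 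The upshot: apart from $\operatorname{poly}(d)\cdot\log H$ lattice points each contributing at most one point, every point to be counted has $z$ in an explicit disc $D(0,R)$ with $R=O(d\sqrt{\log H})$, on which $\log\sup|\sigma_\Omega|=O(d^2\log H)$; and for any nonzero polynomial $P$ of degree $\le D$ the entire function $P(z,\sigma_\Omega(z))$ is not identically zero ($\sigma_\Omega$ being transcendental) and, by the upper growth bound and Jensen's formula, has $O(D\cdot d^2\log H)$ zeros in $D(0,2R)$.

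Finally, apply the Masser--Besson counting lemma on $D(0,R)$ (viewing $\C\cong\R^2$): for a transcendental analytic $f$ on a disc with $\log\sup|f|=L$, the number of $(z,f(z))$ with $[\Q(z,f(z)):\Q]\le d$ and $H(z,f(z))\le H$ is at most $c\,d^{a}(\log d)^{b}\,L\,(\log H)\log\log H$ for absolute $a,b$ — one constructs an auxiliary polynomial of degree $O(\log\log H)$ (the source of that factor) vanishing at many of the points, bounds a suitable evaluation determinant from above by Cauchy's inequalities and from below by the product formula to force the points onto an algebraic curve of controlled degree, and uses the zero estimate above to bound the points on that curve. Taking $L=O(d^2\log H)$ and adding the $\operatorname{poly}(d)\log H$ exceptional points from the localization yields Theorem~\ref{alggs}, the exponents $20$ and $5$ emerging from the polynomial-in-$d$ cost of the elliptic transcendence measure together with the counting lemma. (For Theorem~\ref{alglattice}, where $\omega_1,\omega_2$ are algebraic, one \emph{excludes} $z\in\Omega$ so that $z-\omega$ is a nonzero algebraic number of bounded degree; a direct Liouville estimate then replaces the elliptic transcendence measure, all relevant lattice points already lie in the main disc, $L$ drops to $O(d\log H)$, and the exponents improve to $6$ and $1$.)

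The heart of the argument — and the reason it covers the whole graph — is the localization step: showing that the height condition confines $z$ to a disc of radius $O(d\sqrt{\log H})$. This rests on the positive-definiteness of $Q$ (hence $\operatorname{Im}(\tau)\le 1.9$) and, near far-away lattice points, on a genuine transcendence input, an effective measure of transcendence for the periods; handling those lattice points correctly — cutting the count down from the trivial $H^{O(d)}$ to $\operatorname{poly}(d)\log H$ — is the delicate part. The subsequent determinant-method count is by now standard, the remaining work being the bookkeeping of the exponents of $d$ and $\log d$.
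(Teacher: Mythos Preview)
Your overall architecture --- positive definiteness of the growth quadratic when $\operatorname{Im}\tau\le 1.9$, a localisation step placing all but few of the $z$'s in a disc of controlled radius, then Masser's Proposition~\ref{propn2} together with Besson's zero estimate (Theorem~\ref{Besson_zero}) on that disc --- matches the paper. The gap is in the localisation near far-away lattice points. You assert a transcendence measure $|z-\omega|\ge\exp(-E)$ with $E$ only polylogarithmic in $\max(|m|,|n|)$, attributed loosely to Masser. The measure the paper actually has available (Lemma~\ref{measure}, a case of David--Hirata-Kohno) is
\[
\log|z-\omega|\ \ge\ -c\,d^4(\log d)^2(\log H)\,|\omega|^2\bigl(1+\max\{0,\log|\omega|\}\bigr)^3,
\]
and the factor $|\omega|^2$ makes your comparison $c_\Omega|\omega|^2\le d\log H+E$ vacuous: both sides carry $|\omega|^2$ and the right-hand coefficient dominates once $d,H$ are moderately large. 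So your direct bound on which lattice points are approachable does not go through with the measure actually cited; a polylog-in-$|\omega|$ linear-forms estimate in this exact inhomogeneous setting would need its own statement and justification, and it is not what the paper uses.

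The paper's workaround is the substantive idea you are missing. Rather than bounding $|\omega|$ directly, Proposition~\ref{applgrowth2} shows that any two points $z,z'$ lying outside a disc of radius $A\sqrt{d^9(\log d)^2\log H}$ are $\mathbb{Q}$-proportional: if the nearby lattice points $\omega,\omega'$ were $\mathbb{Q}$-independent one could write $\omega_1=a\omega+b\omega'$ with rational $a,b$ of controlled height, so $az+bz'$ would be an algebraic number of degree $\le d^2$ and height $\le H^{O(d)}$ lying extremely close to the \emph{fixed} period $\omega_1$; applying the transcendence measure there --- where $|\omega_1|$ is a constant and the $|\omega|^2$ factor is harmless --- gives a contradiction. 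All exceptional $z$ therefore lie on a single rational line $\mathbb{Q}z^*$, and a second application of the measure at the minimal lattice point $\omega^*$ on that line bounds the number of integer multiples. This is why the main disc has radius $\asymp d^{9/2}(\log d)\sqrt{\log H}$, whence $\log M\asymp d^9(\log d)^2\log H$, $T\asymp d^{10}(\log d)^2\log H$, and Besson's zero estimate produces the stated $d^{20}(\log d)^5(\log H)^2\log\log H$. This also exposes an internal inconsistency in your write-up: if your disc really had radius $O(d\sqrt{\log H})$ with only $\mathrm{poly}(d)\cdot\log H$ exceptions, the same counting would give a bound of order $d^6(\log d)(\log H)^2\log\log H$ as in Theorem~\ref{alglattice}, not $d^{20}(\log d)^5(\log H)^2\log\log H$; the exponents $20$ and $5$ do not emerge from your argument as written.
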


There have been many recent results on the general topic of bounding the number of rational points of bounded height lying on the graph of some transcendental function. These go back to fundamental work of Bombieri and Pila, and then Pila \cite{BP,P}. In the latter of these papers, a bound of the form $cH^\epsilon$ is established for the number of rational points of height at most $H$ lying on the graph of a transcendental analytic function $f:[0,1]\to \mathbb{R}$. A construction by Surroca \cite{SurrocaShort} shows that this is best possible in the sense that there are examples for which no power of log bound will hold (see also \cite{PilaSurface}). Bounds of the form $c(\log H)^\gamma$ were given for functions satisfying pfaffian differential equations by Pila \cite{PilaPfaffCurve}, and for the Riemann zeta function, restricted to the interval $[2,3]$, by Masser \cite{Masser}. Pila's result is of a real nature, and wouldn't apply for instance to a pfaffian entire function if instead of rational points one considered points of bounded degree. Masser proves his result for points of bounded degree on a compact set containing $[2,3]$. 

Of the many other papers on this topic the most relevant to us are those by Besson \cite{Besson} and by the first and last authors \cite{BJ2}. In these, results similar to Theorems \ref{alglattice} and \ref{alggs} were proved, with no condition on $\tau$, but only for points lying on the graph of the restriction of $\sigma_\Omega$ to a disk. The result in \cite{Besson} is obtained with the aid of a zero estimate for $\sigma_\Omega$, also proved in \cite{Besson}, and this leads to a slightly better bound than is obtained \cite{BJ2}. The results in \cite{BJ2} apply generally to entire functions of finite order and positive lower order, with the exponent $\gamma$ in the bound $c(\log H)^\gamma$ depending only on the order and lower order. The constant $c$ depends the function, the disk and an upper bound $d$ for the degrees of the points considered. In the results in \cite{Masser} and \cite{Besson}, the dependence on $d$ was shown to be polynomial. For the interest in obtaining polynomial dependence on $d$ see \cite{HarryGaloisBounds,BJS}. 

What is striking about Theorem \ref{alggs} is that we do not have to restrict the function at all. We count algebraic points of bounded height and degree on the entire graph. For Theorem \ref{alglattice}, where the lattice points are algebraic, it is clear that the condition $z\notin\Omega$ could not be omitted. In earlier work of the authors, some results were obtained for restrictions of entire functions to unbounded sets, but generally a lot of points had to be omitted. In \cite{BJ1} the first and third authors obtained, for example, a bound for points on the graph of the Riemann zeta function restricted to a sector $-\frac{\pi}{2}<\theta\leq \arg{z}\leq \phi<\frac{\pi}{2}$. In recent work of the second author \cite{Chalebgwa}, bounds were given for restrictions of certain entire functions, defined by infinite products, to domains of the form $\mathbb{C}\setminus S$, where $S$ is an arbitrarily small sector whose interior contains the positive real line. 

To prove Theorems \ref{alglattice} and \ref{alggs}, we draw on the reasoning in \cite{Besson}. We combine Besson's zero estimate with a proposition of Masser from \cite{Masser}. Besson does this too, but we must work on increasingly large disks and our challenge is to show that, with perhaps a manageable number of exceptions, for any given $d$ and $H$ the $z$'s that we are counting all lie in a disk of sufficiently small radius. It is possible to avoid reliance on the zero estimate from \cite{Besson} and instead use techniques from earlier papers of the first and third authors, though at the cost of larger exponents. We expect it would also be possible to use the zero estimate of Coman and Poletsky, given as Corollary 7.2 in \cite{CP}. 

The current project began in the PhD research of the second author. A version of Theorem \ref{alglattice}, with $\Omega=\mathbb{Z}+\mathbb{Z}i$, was given in his thesis. 

In \S 2 we discuss a growth property of $\sigma_\Omega$ in the case where $\text{Im}(\tau)\le  1.9$. In \S 3 we prove Theorem \ref{alglattice}. In \S 4 we prove Theorem \ref{alggs}. Then in \S 5 we briefly outline a proof of a weaker version of Theorem \ref{alggs}, which avoids use of the zero estimate from \cite{Besson}.

\section{Growth}

Recall that $\Omega=\omega_1\mathbb{Z}+\omega_2\mathbb{Z}$, where $\tau=\frac{\omega_2}{\omega_1}$ lies in the upper half plane and has modulus at least 1 and real part in the interval $\left[-\frac{1}{2},\frac{1}{2}\right]$. Let $P$ be the closed region in $\mathbb{C}$ enclosed by the parallelogram with vertices $\frac{\pm\omega_1\pm \omega_2}{2}$. 

\begin{prop}\label{growth}
Suppose $\text{Im}(\tau)\leq 1.9$. There are effective positive constants $r$ and $c$ such that, for all $z\in\mathbb{C}$ with $|z|\geq r$, there exists $z_0\in P$ such that $z-z_0\in\Omega$ and $|\sigma_{\Omega}(z)|\geq |\sigma_{\Omega}(z_0)|e^{c|z|^2}$.
\end{prop}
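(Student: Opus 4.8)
The plan is to combine the quasi-periodicity of $\sigma_\Omega$ with the Legendre relation to reduce the claim to the positive-definiteness of an explicit real quadratic form on $\comp\cong\R^2$, and then to check that this positive-definiteness holds precisely because $\text{Im}(\tau)\le 1.9$. Since $\sigma_{\lambda\Omega}(\lambda z)=\lambda\sigma_\Omega(z)$, and rescaling the lattice only alters the eventual constants $r$ and $c$, we may normalise $\omega_1=1$ and $\omega_2=\tau$. Recall the quasi-period functional $\eta\colon\Omega\to\comp$ given by $\eta(m\omega_1+n\omega_2)=m\eta_1+n\eta_2$, and the transformation law $\sigma_\Omega(u+\omega)=\pm\,e^{\eta(\omega)(u+\omega/2)}\sigma_\Omega(u)$, valid for all $u\in\comp$, $\omega\in\Omega$. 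Let $L\colon\comp\to\comp$ be the $\R$-linear extension of $\omega_i\mapsto\eta_i$, so that $L$ agrees with $\eta$ on $\Omega$.

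First I would carry out the reduction. Given $z$ with $|z|$ large, write $z=s\omega_1+t\omega_2$ with $s,t\in\R$, let $m,n\in\Z$ be nearest integers to $s,t$, and put $\omega=m\omega_1+n\omega_2\in\Omega$ and $z_0=z-\omega$, so that $z_0\in P$. (If $z_0=0$ then $\sigma_\Omega(z_0)=0$ and the asserted inequality is trivial, so assume $z_0\ne 0$.) Applying the transformation law with $u=z_0$ and taking $\log|\cdot|$ gives, since $z_0+\frac{\omega}{2}=\frac{z+z_0}{2}$,
$$\log|\sigma_\Omega(z)|=\log|\sigma_\Omega(z_0)|+\text{Re}\!\left(L(\omega)\cdot\tfrac{z+z_0}{2}\right).$$
Now $z_0$ ranges over the compact set $P$, so $z_0$ and $L(z_0)$ are bounded; expanding the right-hand side using $L(\omega)=L(z)-L(z_0)$ and the $\R$-linearity of $L$, every term containing $z_0$ is $O(|z|)$, leaving
$$\log|\sigma_\Omega(z)|=\log|\sigma_\Omega(z_0)|+\tfrac12 Q(z)+O(|z|),\qquad Q(z):=\text{Re}\bigl(z\,L(z)\bigr).$$

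Next I would analyse $Q$. Write the $\R$-linear map in the form $L(z)=\alpha z+\beta\bar z$ with $\alpha,\beta\in\comp$. Then $L(z)w-L(w)z=\beta(\bar z w-\bar w z)=2i\beta\,\text{Im}(\bar z w)$, while a direct computation in the basis $\omega_1,\omega_2$ gives $L(z)w-L(w)z=(\eta_1\omega_2-\eta_2\omega_1)\,\text{Im}(\bar z w)/\text{Im}(\bar\omega_1\omega_2)$; comparing these, the Legendre relation $\eta_1\omega_2-\eta_2\omega_1=2\pi i$ forces $\beta=\pi/\text{Im}(\bar\omega_1\omega_2)=\pi/\text{Im}(\tau)>0$. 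Hence $Q(z)=\text{Re}(\alpha z^2)+\beta|z|^2\ge(\beta-|\alpha|)|z|^2$, and it remains to show $|\alpha|<\beta$. Since $\alpha=\eta_1-\beta$, I would invoke the classical evaluation $\eta_1=\frac{\pi^2}{3}E_2(\tau)$, where $E_2$ is the quasi-modular Eisenstein series with $q$-expansion $1-24\sum_{n\ge1}\sigma_1(n)q^n$, $q=e^{2\pi i\tau}$; equivalently $\alpha=\frac{\pi^2}{3}\bigl(E_2(\tau)-\tfrac{3}{\pi\,\text{Im}(\tau)}\bigr)$. As $\tau$ lies in the standard fundamental domain, $|q|\le e^{-\pi\sqrt 3}$, so the $q$-series contributes only a negligible perturbation, and $|\alpha|<\beta$ reduces to the elementary inequality $\text{Im}(\tau)<6/\pi=1.909\ldots$ — which is exactly where the hypothesis $\text{Im}(\tau)\le 1.9$ is used. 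Writing $\lambda:=\beta-|\alpha|>0$ we get $Q(z)\ge\lambda|z|^2$, hence
$$\log|\sigma_\Omega(z)|\ge\log|\sigma_\Omega(z_0)|+\tfrac{\lambda}{2}|z|^2+O(|z|)\ge\log|\sigma_\Omega(z_0)|+\tfrac{\lambda}{4}|z|^2$$
for all $|z|\ge r$ with $r$ chosen large enough to absorb the linear error term; then $c=\lambda/4$ works.

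The main obstacle is this last step: identifying $\alpha$ precisely enough — through the value of $\eta_1$ in terms of $E_2(\tau)$, or equivalently via the theta-function representation of $\sigma_\Omega$ and the growth of $\theta_1$ across a period — to conclude that $Q$ is positive definite, and, in doing so, locating the threshold $6/\pi$ so that $\text{Im}(\tau)\le 1.9$ is (comfortably) sufficient. The remaining ingredients, the quasi-periodicity bookkeeping and the $O(|z|)$ estimates, are routine. It is worth noting that an upper bound on $\text{Im}(\tau)$ is genuinely needed: once $\text{Im}(\tau)$ exceeds $6/\pi$ the form $Q$ ceases to be positive definite — it takes negative values along the ``long'' direction of the lattice — so no lower bound of the form $|\sigma_\Omega(z_0)|\,e^{c|z|^2}$ can hold on the whole plane in that regime.
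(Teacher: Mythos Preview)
Your proposal is correct and follows essentially the same route as the paper: quasi-periodicity reduces the estimate to the positive-definiteness of a real quadratic form, Legendre is used to simplify, and the conclusion comes from $\eta_1=\tfrac{\pi^2}{3}E_2(\tau)$ together with a $q$-series bound yielding the threshold $6/\pi$. The only cosmetic difference is that you encode the form via the decomposition $L(z)=\alpha z+\beta\bar z$ and the criterion $|\alpha|<\beta$, whereas the paper works in $(m,n)$-coordinates and checks the discriminant $\Delta=\text{Im}(\tau)\bigl(|\eta_1|^2\text{Im}(\tau)-2\pi\,\text{Re}(\eta_1)\bigr)<0$; these are equivalent, and the paper's iterative numerical check makes rigorous the step you summarise as ``negligible perturbation''.
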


The proof is based on reasoning given in \cite{Besson}.

\proof Let $\Omega'=\mathbb{Z}+\mathbb{Z}\tau$. Since $\Omega=\omega_1\Omega'$, we have $\sigma_{\Omega}(\omega_1 z)=\omega_1\sigma_{\Omega'}(z)$ and so it is sufficient to prove Proposition \ref{growth} with $\Omega'$, $\sigma_{\Omega'}$ and $P'=\frac{1}{\omega_1}P$ in place of $\Omega$, $\sigma_{\Omega}$ and $P$.

Let $z_0\in P'$. For any $n,m\in\mathbb{Z}$, $$\sigma_{\Omega'}(z_0+m+n\tau)=(-1)^{m+n+mn}\sigma_{\Omega'}(z_0)e^{(m\eta_1+n\eta_2)(z_0+\frac{m}{2} + \frac{n}{2}\tau)}$$ where $\eta_1$ and $\eta_2$ depend on $\tau$ (and we follow the convention which assigns them a value twice that in \cite{Besson}). The above equation is standard (see, for example, (20.21) on page 255 of \cite{MasserBook}). We are interested in how $$\text{Re}\left[\left(m\eta_1+n\eta_2\right)\left(z_0+\frac{m}{2} + \frac{n}{2}\tau\right)\right]$$ $$=\text{Re}\left(\frac{\eta_1}{2}\right)m^2+\text{Re}\left(\frac{\eta_1\tau+\eta_2}{2}\right)mn+\text{Re}\left(\frac{\eta_2\tau}{2}\right)n^2+\text{Re}\left(\eta_1z_0\right)m+\text{Re}\left(\eta_2z_0\right)n$$ behaves as $\max\{|m|,|n|\}$ increases. Following Besson, we consider the discriminant of $\text{Re}\left(\frac{\eta_1}{2}\right)m^2+\text{Re}\left(\frac{\eta_1\tau+\eta_2}{2}\right)mn+\text{Re}\left(\frac{\eta_2\tau}{2}\right)n^2$, $$\Delta=\left(\text{Re}\left(\frac{\eta_1\tau+\eta_2}{2}\right)\right)^2-4\left(\text{Re}\left(\frac{\eta_1}{2}\right)\right)\left(\text{Re}\left(\frac{\eta_2\tau}{2}\right)\right).$$ If $\Delta<0$ then $\left(\text{Re}\left(\frac{\eta_1}{2}\right)\right)\left(\text{Re}\left(\frac{\eta_2\tau}{2}\right)\right)\neq 0$ and $$\left|\text{Re}\left(\frac{\eta_1}{2}\right)m^2+\text{Re}\left(\frac{\eta_1\tau+\eta_2}{2}\right)mn+\text{Re}\left(\frac{\eta_2\tau}{2}\right)n^2\right|$$ $$\geq \max\left\{\left|\frac{\Delta}{4\text{Re}\left(\frac{\eta_2\tau}{2}\right)}\right|m^2,\left|\frac{\Delta}{4\text{Re}\left(\frac{\eta_1}{2}\right)}\right|n^2\right\},$$ by the general fact that if $a,b,c\in\mathbb{R}$ and $b^2-4ac<0$ then the minimum value of $\left|ax^2+bx+c\right|$ is $\left|\frac{4ac-b^2}{4a}\right|$. 

Since $\left|z_0\right|\leq 1+\left|\tau\right|$, $$\left|\text{Re}(\eta_1z_0)m+\text{Re}(\eta_2z_0)n\right|\leq (1+\left|\tau\right|)\max\{\left|\eta_1\right|,\left|\eta_2\right|\}\max\{\left|m\right|,\left|n\right|\}.$$ Let $c_1=\min\left\{\left|\dfrac{\Delta}{4\text{Re}\left(\frac{\eta_2\tau}{2}\right)}\right|,\left|\dfrac{\Delta}{4\text{Re}\left(\frac{\eta_1}{2}\right)}\right|\right\}$ and $c_2=(1+\left|\tau\right|)\max\{\left|\eta_1\right|,\left|\eta_2\right|\}$. Suppose $\Delta<0$. For each $z\in \mathbb{C}$, there exist $z_0\in P'$ and $m,n\in \mathbb{Z}$ such that $z=z_0+m+n\tau$. Then $$\text{Re}\left[(m\eta_1+n\eta_2)\left(z_0+\frac{m}{2} + \frac{n}{2}\tau\right)\right]\geq c_1\left(\max\{|m|,|n|\}\right)^2-c_2\max\{|m|,|n|\}.$$ Since $\max\{\left|m\right|,\left|n\right|\}\geq \dfrac{|z|-|z_0|}{1+\left|\tau\right|}$, we obtain positive constants $r$ and $c$, depending effectively on $\omega_1$ and $\omega_2$, such that $$\text{Re}\left[(m\eta_1+n\eta_2)\left(z_0+\frac{m}{2} + \frac{n}{2}\tau\right)\right]\geq c\left|z\right|^2$$ whenever $\left|z\right|\geq r$. Then, for all $z\in \mathbb{C}$ with $\left|z\right|\geq r$, there exists $z_0\in P'$ such that $z-z_0\in \Omega'$ and $$\left|\sigma_{\Omega'}(z)\right|\geq \left|\sigma_{\Omega'}(z_0)\right|e^{c|z|^2}.$$

It remains to show that $\Delta<0$ if $\text{Im}(\tau)\leq 1.9$. This is discussed in \cite{Besson}, but we provide further details. A simple calculation gives $$\Delta=\text{Im}(\tau)(\left|\eta_1\right|^2\text{Im}(\tau)-2\pi\text{Re}(\eta_1)).$$ This is done on page 5 of \cite{Besson}, but recall that the $\eta_1$ and $\eta_2$ there are half ours. We have $\eta_1=\frac{\pi^2}{3}E_2(\tau)$, where $$E_2(\tau)=1-24\sum\limits_{n\geq 1}\frac{nq^n}{1-q^n}$$ and $q=e^{2\pi i \tau}$ (see \cite{Lang}). Letting $y=\text{Im}(\tau)$ and $\phi(y)=\dfrac{24e^{-2\pi y}}{(1-e^{-2\pi y})^3}$, we have $$\text{Re}(E_2(\tau))\geq 1-\phi(y)$$ and $$\left|E_2(\tau)\right|\leq 1+\phi(y)$$ (see page 128 of \cite{IJT}). Therefore $\Delta<0$ provided $$y<\frac{6(1-\phi(y))}{\pi(1+\phi(y))^2}.$$ We compute a sequence $y_0,y_1,...$ with $y_0=\frac{\sqrt{3}}{2}$ and $$y_{n+1}=\frac{6(1-\phi(y_n))}{\pi(1+\phi(y_n))^2}.$$ We obtain $y_0<y_1<...<y_4\approx 1.909$.  Since $\phi(y)$ is positive and decreasing for $y\geq \frac{\sqrt{3}}{2}$, this ensures that $\Delta<0$ when $\frac{\sqrt{3}}{2}\leq y\leq 1.9$. The inequality $\frac{\sqrt{3}}{2}\leq y$ is implied by the fact that $\left|\tau\right|\geq 1$ and $-\frac{1}{2}\leq\text{Re}(\tau)\leq \frac{1}{2}$. \endproof

\section{When the lattice points are algebraic}

In this section we prove Theorem \ref{alglattice}. The following zero estimate was obtained in \cite{Besson}, for any lattice $\Omega=\omega_1\mathbb{Z}+\omega_2\mathbb{Z}$ (see Th\'eor\`eme 1.2 in \cite{Besson}).

\begin{thm}\label{Besson_zero} There is an effective positive constant $c$, depending only on $\omega_1$ and $\omega_2$, such that, for every integer $L\ge 1$, real number $R\ge 2$ and nonzero polynomial $P(X,Y)$, with complex coefficients and degree at most $L$ in each variable, the function $P(z,\sigma_\Omega(z))$ has at most $cL (R+\sqrt{L})^2 \log (R+L)$ zeroes in the disk $|z|\le R$.
\end{thm}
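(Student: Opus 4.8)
The plan is to bound the number of zeros of $F(z):=P(z,\sigma_\Omega(z))$ in $|z|\le R$ by Jensen's formula, which reduces the problem to two estimates: an upper bound for the maximum modulus of $F$ on a disk a little larger than $|z|\le R$, and a lower bound for $|F|$ at some point of not-too-large modulus. Since neither the zero set of $F$ nor the degree hypothesis on $P$ changes if $P$ is multiplied by a nonzero constant, I would first normalise so that the coefficient of $P$ of largest modulus is $1$; and I would note that $F\not\equiv 0$, because $\sigma_\Omega$ is transcendental over $\mathbb{C}(z)$ (an entire function algebraic over $\mathbb{C}(z)$ is a polynomial, whereas $\sigma_\Omega$ has infinitely many zeros and is of order $2$). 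Equivalently, the $(L+1)^2$ functions $\phi_{ij}(z):=z^i\sigma_\Omega(z)^j$ with $0\le i,j\le L$ are linearly independent over $\mathbb{C}$.

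The upper bound is routine. Since $\sigma_\Omega$ is entire of order $2$ and finite type there is an effective $\gamma>0$, depending only on $\omega_1,\omega_2$, with $|\sigma_\Omega(z)|\le e^{\gamma|z|^2}$ for all $z$; this can be read off the product defining $\sigma_\Omega$, or deduced from the quasi-periodicity relation together with the boundedness of $\sigma_\Omega$ on the fundamental parallelogram, which is essentially the computation already carried out in the proof of Proposition~\ref{growth}. Hence, for $|z|=T$, $|F(z)|\le (L+1)^2\max\{1,T\}^{L}\,e^{\gamma L T^{2}}$, so $\log\max_{|z|=T}|F(z)|\le c_1\bigl(LT^{2}+L\log T+\log L\bigr)$. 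Taking $T$ to be a fixed multiple of $R+\sqrt L$ — the scale at which the disk is large enough for the argument below not to degenerate, the $\sqrt L$ accounting for the degree-$L$ polynomial coefficients appearing alongside the order-$2$ function — this is at most $c_2L(R+\sqrt L)^{2}$.

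The crux is the lower bound, and here the genuine difficulty lies. One cannot simply apply Jensen at the origin, since $F(0)$ is just the constant term of $P$, which may vanish or be tiny after normalisation; worse, $F$ may vanish to high order at any prescribed point. The way around this is to use the linear independence of the $\phi_{ij}$: a suitable $(L+1)^2$-sized auxiliary determinant built from them — their Wronskian, or an interpolation determinant with evaluation points in a disk of radius $\sim R+\sqrt L$ — is not identically zero, and an estimate for its own maximum modulus (this is where the quasi-periodicity of $\sigma_\Omega$ is used, to pin down the size of $\sigma_\Omega$ and its derivatives on $|z|\le T$ in terms of its values on the fundamental parallelogram) together with Jensen produces a point $z_0$ with $|z_0|$ bounded by a multiple of $R+\sqrt L$, the determinant nonzero at $z_0$, and $\log$ of its value there bounded below by $-c_3 L(R+\sqrt L)^{2}\log(R+L)$. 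Cramer's rule then recovers the normalised coefficients of $P$ from the relevant values (or derivatives) of $F$ near $z_0$, forcing at least one of these to have modulus at least the determinant value divided by a bound on the competing minors, and a Cauchy estimate on a small disk around $z_0$ converts this into $\log\max_{|z-z_0|\le 1}|F(z)|\ge -c_4 L(R+\sqrt L)^{2}\log(R+L)$. The delicate point — and, I expect, the heart of Besson's argument — is to carry this out so that the dependence on $L$ stays \emph{linear}: a determinant built naively from all $(L+1)^2$ monomials $z^i\sigma_\Omega(z)^j$ carries total $\sigma_\Omega$-weight of order $L^{3}$, and would give a bound larger by a factor of order $L^{2}$, so one needs a more careful construction of the auxiliary determinant, after which only the logarithmic loss $\log(R+L)$ survives from its combinatorial size.

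Finally I would assemble the pieces. Re-centring Jensen's formula at the point of the disk $|z-z_0|\le 1$ where $|F|$ attains its maximum, with outer radius a fixed multiple of $R+\sqrt L$, the number of zeros of $F$ in $|z|\le R$ is at most a constant times $\log\max_{|z|\le c(R+\sqrt L)}|F(z)|-\log\max_{|z-z_0|\le 1}|F(z)|$, which by the two estimates above is at most $cL(R+\sqrt L)^{2}\log(R+L)$, as required. I expect the lower bound step, with the correct simultaneous dependence on $L$ and $R$, to be the main obstacle; the bookkeeping that turns a crude determinant estimate into the clean shape $L(R+\sqrt L)^2\log(R+L)$ is the part where care is needed.
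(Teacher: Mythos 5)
First, a point of comparison: the paper does not prove this statement. Theorem \ref{Besson_zero} is Besson's Th\'eor\`eme 1.2, imported from \cite{Besson}; the only argument the text supplies is the remark that the normalisation $\omega_1=1$ assumed there can be dropped via the homogeneity $\sigma_{\Omega}(\omega_1 z)=\omega_1\sigma_{\Omega'}(z)$ for $\Omega=\omega_1\Omega'$. So your proposal is a from-scratch reconstruction, and judged as such it is an outline of the right general strategy (Jensen's formula, an upper bound for $\log\max|F|$ on a disk of radius comparable to $R+\sqrt L$, and a lower bound for $|F|$ at a controlled point) rather than a proof. The non-vanishing of $F$, the normalisation of the coefficients, and the upper bound $\log\max_{|z|=T}|F(z)|\le c(LT^2+L\log T+\log L)$ are all fine.

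The genuine gap is exactly where you flag it, and flagging it does not fill it. The entire content of the theorem is the lower bound $\log\max_{|z-z_0|\le 1}|F(z)|\ge -cL(R+\sqrt L)^2\log(R+L)$ with \emph{linear} dependence on $L$. You correctly observe that a Wronskian or interpolation determinant built naively from the $(L+1)^2$ functions $z^i\sigma_\Omega(z)^j$ carries $\sigma$-weight of order $L^3$ and would lose a factor of order $L^2$, and then defer to ``a more careful construction of the auxiliary determinant'' without exhibiting one; that deferred construction is the theorem. There is also a secondary circularity you do not address: Cramer's rule requires a quantitative \emph{lower} bound for the auxiliary determinant at some $z_0$ with $|z_0|=O(R+\sqrt L)$, and producing such a bound for a non-identically-zero entire function of this size is itself a substantive step --- one needs either a minimum-modulus input of Boutroux--Cartan type (this is how \S 5 of the paper proceeds, at the cost of the weaker exponent $cd^{30}(\log d)^6(\log H)^3$) or Besson's exploitation of quasi-periodicity to reach a point where a single monomial of $P$ dominates. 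As it stands, your argument would at best deliver a bound of the shape $cL^3(R+\sqrt L)^2\log(R+L)$, not the stated one; for the stated exponents the honest course is to cite \cite{Besson}, adding only the rescaling remark needed to remove the hypothesis $\omega_1=1$.
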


Besson uses this, in combination with a result of Masser, to prove the following (see Th\'eor\`eme 1.1 in \cite{Besson}).

\begin{thm}\label{Besson}
There is an effective positive constant $c$ such that, for all $d\geq 1$, $H\geq 3$ and $R\geq 2$, there are at most $$cR^{10}(\log R)\frac{d^4(\log H)^2}{\log(d\log H)}$$ algebraic points $(z,\sigma_\Omega(z))$ such that $[\mathbb{Q}(z,\sigma_\Omega(z)):\mathbb{Q}]\leq d$, $H(z,\sigma_\Omega(z))\leq H$ and $\left|z\right|\leq R$.
\end{thm}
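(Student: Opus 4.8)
The plan is to combine the zero estimate of Theorem~\ref{Besson_zero} with Masser's counting proposition exactly as Besson does, but to keep careful track of the dependence on the radius $R$. Given $d$, $H$ and $R$, we consider the set of algebraic points $(z,\sigma_\Omega(z))$ with $[\mathbb{Q}(z,\sigma_\Omega(z)):\mathbb{Q}]\le d$, $H(z,\sigma_\Omega(z))\le H$ and $|z|\le R$. Following the standard interpolation strategy, one chooses a degree parameter $L$ and, assuming for contradiction that there are more than a certain number $T$ of such points, one builds a nonzero polynomial $P(X,Y)$ of degree at most $L$ in each variable whose coefficients are algebraic integers of controlled size, vanishing at all of the points; this is where Masser's proposition from \cite{Masser} enters, supplying such a $P$ once $T$ is large enough relative to $L$, $d$ and $\log H$ (roughly $L^2 \gg d(\log H)/\log(d\log H)$ suffices, with the size of the coefficients bounded in terms of $R$, $L$, $d$, $H$). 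On the other hand Theorem~\ref{Besson_zero} says $P(z,\sigma_\Omega(z))$ can have at most $cL(R+\sqrt{L})^2\log(R+L)$ zeroes in $|z|\le R$, so if $T$ exceeds this quantity we get a contradiction.

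Concretely, I would first fix $L$ to be the smallest integer with $L^2$ above the threshold coming from Masser's proposition, so $L \asymp \sqrt{d(\log H)/\log(d\log H)}$ (up to the logarithmic corrections needed to make the sizes work, which is the usual minor nuisance). With this choice $\sqrt{L} \ll R$ is not automatic, so I would split according to whether $R \ge \sqrt{L}$ or not; in the relevant regime (large $R$) the zero bound simplifies to $O(L R^2 \log(R+L))$. Balancing the Masser lower bound on $T$ against this zero upper bound forces
\[
T \le c\, L R^2 \log(R+L) \le c\, R^2 \log(R+L)\sqrt{\frac{d(\log H)}{\log(d\log H)}},
\]
and then one has to absorb the remaining factors: the point is that the number of points we can guarantee to interpolate is itself polynomial in $R$, $d$, $\log H$, and squaring/rearranging the inequalities produces the stated bound $c R^{10}(\log R)\, d^4(\log H)^2/\log(d\log H)$. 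The exponent $10$ on $R$ and $4$ on $d$ come precisely from the exponent $2$ in the $(R+\sqrt{L})^2$ factor of the zero estimate interacting with the $L \asymp \sqrt d \cdot \sqrt{\log H}$ scaling and the way the coefficient sizes in Masser's proposition grow with $R$; I would reproduce Besson's bookkeeping rather than reinvent it. The logarithmic and $\log\log$ factors need a little care but are routine.

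I expect the main obstacle — or rather the main place where care is needed — to be the size estimate on the interpolating polynomial and its interaction with $R$. Masser's proposition bounds the heights of the coefficients of $P$ in terms of the sup-norm of $\sigma_\Omega$ and its derivatives on the disk $|z|\le R$, and since $\sigma_\Omega$ is entire of order $2$ this sup-norm is of size $e^{O(R^2)}$; one must check that this exponential-in-$R^2$ growth only contributes polynomially (in fact a bounded power of $R$) to the final count, which it does because it enters $\log$'d and then gets divided by the $L$ coming from the degree of $P$. Keeping the dependence on $R$ genuinely polynomial, with the clean exponent $10$, rather than merely ``polynomial with some exponent'', is the technical heart of the argument and the reason Besson's precise version of the zero estimate (with the explicit $(R+\sqrt L)^2\log(R+L)$) is needed rather than a cruder one. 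Everything else is a direct transcription of the interpolation-and-zero-estimate method.
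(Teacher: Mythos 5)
The paper does not prove this theorem. It is quoted (essentially verbatim, generalized to arbitrary $\omega_1$) as Th\'eor\`eme 1.1 of Besson's paper \cite{Besson}, and the only supporting remark in the paper is the sentence immediately after the statement: Besson's normalization $\omega_1=1$ is harmless because of the scaling identity $\sigma_{\Omega}(\omega_1 z)=\omega_1\sigma_{\Omega'}(z)$ with $\Omega=\omega_1\Omega'$, $\Omega'=\mathbb{Z}+\mathbb{Z}\tau$. In fact the theorem is never invoked later; the authors explicitly say that using it directly would give a \emph{weaker} result than Theorem \ref{alglattice}, and they instead reuse Besson's ingredients (Theorem \ref{Besson_zero} and Proposition \ref{propn2}) with their own choice of parameters. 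So there is no in-paper proof for you to be compared against; you are reconstructing Besson's argument from first principles, which is a different and harder task, and you should have flagged that this is a cited result.

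As a reconstruction, the sketch is right in outline (interpolate \`a la Masser, then apply the zero estimate), but the parameter bookkeeping is not a ``routine nuisance'': it is where the theorem lives, and as written your numbers do not close. You posit a degree parameter $L \asymp \sqrt{d\log H/\log(d\log H)}$. But the hypothesis \eqref{propn2condition} carries the factor $H^{48d^2}$, so the total degree $T$ of the interpolating polynomial must satisfy $T\log(AZ) \gtrsim d^2\log H$; thus $T$ is of size at least $d^2\log H/\log(AZ)$, not anywhere near its square root. Plugging your $L$ into the zero estimate $cL(R+\sqrt{L})^2\log(R+L)$ at, say, $R=2$ would give a count of order $d\log H$, which is far below the stated $d^4(\log H)^2/\log(d\log H)$ --- an internal inconsistency showing the sketch cannot be made to work as written. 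More substantively, the naive application of Proposition \ref{propn2} that you describe, with $Z\asymp R$ and $A\asymp 1/R$ so that $AZ\asymp 1$, forces $T\asymp d^2\log H + dR^2$; feeding that into the zero estimate produces $\asymp d^4(\log H)^2\log(d\log H)$ at bounded $R$, i.e.\ the factor $\log(d\log H)$ appears in the \emph{numerator}, not the denominator. (This is exactly the kind of bound the authors obtain when they actually run this argument in \S3; the $\log d\cdot\log\log H$ factor in Theorem \ref{alglattice} sits upstairs.) The division by $\log(d\log H)$ in Besson's theorem requires a further idea --- roughly, applying the interpolation on pieces of diameter small enough that $\log(AZ)$ can be taken comparable to $\log T$, shrinking $T$ by a logarithmic factor, together with a way of organizing the count so that the multitude of pieces does not dominate --- and this is precisely what your sketch omits when it waves at the log factors as ``routine.'' Without that idea you would prove only a weaker statement, comparable to Theorem \ref{alglattice} or to \S5's bound, not the one asserted here.
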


This result and Theorem \ref{Besson_zero} are actually stated in \cite{Besson} under the assumption that $\omega_1=1$. However, it is clear from the argument that this is not needed. In particular, the identity $\sigma_{\Omega}(\omega_1 z)=\omega_1\sigma_{\Omega'}(z)$, when $\Omega'=\mathbb{Z}+\mathbb{Z}\tau$ and $\Omega=\omega_1\Omega'$, ensures that Theorem \ref{Besson_zero} holds for any lattice. 

In order to apply Besson's work to prove Theorem \ref{alglattice}, we show that, for each $d$ and $H$, all the relevant $z$'s lie in a disk whose radius is polynomial in $d$ and $\log H$. For values of $z$ not close to a lattice point, we use Proposition \ref{growth}. We do not consider the lattice points themselves. That leaves only those values of $z$ which are close but not equal to lattice points. We use basic facts about height and degree to eliminate the possibility of any of these. 

We could then simply apply Theorem \ref{Besson} to obtain a result along the lines of Theorem \ref{alglattice}. However, this would lead to a weaker estimate. So instead we use Besson's method of proof. This involves combining his zero estimate with Proposition 2 of \cite{Masser}, which is as follows.

\begin{prop}\label{propn2} Given integers $d\ge 1, T\ge \sqrt{8d}$ and positive real numbers $A,Z,M$ and $H$, with $H\geq 1$, let $f_1$ and $f_2$ be analytic functions on a neighbourhood of the disk $|z|\le 2Z$. Suppose $|f_1(z)|\le M$ and $|f_2(z)|\le M$ for all $\left|z\right|\leq 2Z$. Let $\mathcal Z\subseteq\mathbb{C}$ be finite such that, for all $z,z'\in \mathcal Z$, 
\begin{itemize}
\item[(a)] $|z|\le Z$,
\item[(b)] $ |z-z'| \le 1/A$,
\item[(c)] $[\mathbb Q (f_1(z),f_2(z)) :\mathbb{Q}]\le d$ and
\item[(d)] $H(f_1(z),f_2(z)) \le H$.
\end{itemize}
If 
\begin{equation}\label{propn2condition}
(AZ)^T > (4T)^{96d^2/T} (M+1)^{16 d} H^{48d^2}
\end{equation}
then there is a non-zero polynomial $P \in \mathbb{C}[X,Y]$, of total degree at most $T$, such that $$P(f_1(z),f_2(z))=0$$ for all $z \in \mathcal Z$.

\end{prop}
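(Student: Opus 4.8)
The plan is to run the determinant method of Bombieri and Pila, in the sharpened form Masser uses, the delicate point being to keep the dependence on $d$ down to a small power. Write $N=\binom{T+2}{2}$ for the number of monomials $X^iY^j$ with $i+j\le T$, and list them as $m_1,\dots,m_N$. A nonzero $P\in\mathbb{C}[X,Y]$ of total degree at most $T$ with $P(f_1(z),f_2(z))=0$ for every $z\in\mathcal Z$ exists exactly when the matrix with rows indexed by $z\in\mathcal Z$ and entries $m_k(f_1(z),f_2(z))$ has rank less than $N$. If $|\mathcal Z|<N$ this is automatic, so I would assume $|\mathcal Z|\ge N$ and argue for a contradiction: if the rank is $N$, pick $z_1,\dots,z_N\in\mathcal Z$ with $\Delta:=\det\bigl(m_k(f_1(z_i),f_2(z_i))\bigr)_{1\le i,k\le N}\ne0$, and aim to show that $|\Delta|$ is simultaneously too small and too large.

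For the upper bound, note that $g_k(z):=m_k(f_1(z),f_2(z))$ is analytic on $|z|\le 2Z$ and, since $|f_1|,|f_2|\le M$ there and $\deg m_k\le T$, bounded there by $(M+1)^T$. By (a) and (b) the points $z_i$ all lie in the disc of radius $1/A$ about $z_1$, while the disc of radius $Z$ about $z_1$ still lies inside $|z|\le 2Z$. I would Taylor-expand each $g_k$ about $z_1$, bound its coefficients by Cauchy's inequalities on the disc of radius $Z$, and expand $\Delta$ by the Cauchy--Binet formula: this writes $\Delta$ as a convergent sum of products of an $N\times N$ minor of Taylor coefficients with a generalized Vandermonde determinant in the differences $z_i-z_1$. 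A generalized Vandermonde with distinct exponents $0\le\nu_1<\cdots<\nu_N$ is divisible by $\prod_{i<j}(z_i-z_j)$ and is bounded by $N!\,A^{-(\nu_1+\cdots+\nu_N)}$, with $\nu_1+\cdots+\nu_N\ge\binom N2$; summing the resulting geometric series yields an explicit bound of the shape $|\Delta|\le C_1(M+1)^{TN}(AZ)^{-\binom N2}$ with $C_1$ an explicit constant depending only on $N$.

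The lower bound is the crux. The number $\Delta$ is algebraic and nonzero, but a crude appeal to Liouville is fatal: a priori $\Delta$ only lies in the compositum of the fields $\mathbb{Q}(f_1(z_i),f_2(z_i))$, whose degree can be as large as $d^N$, giving only $|\Delta|\ge H(\Delta)^{-d^N}$. To avoid this one has to reorganise the linear algebra so that the determinant actually produced is \emph{rational} --- for instance by replacing each point $(f_1(z),f_2(z))$ of degree $e\le d$ by the coefficients of its minimal polynomials (equivalently, by the elementary symmetric functions of its Galois conjugates), a rational point of height at most $cH^d$, and performing the determinant construction over $\mathbb{Q}$ in these coordinates. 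The determinant is then a nonzero rational number whose denominator is bounded by an explicit power of $cH^d$, so $|\Delta|\ge (cH^d)^{-O(N)}$; the exponent of $d$ is now linear, and it is only the subsequent balancing against the number of monomials, together with the hypothesis $T\ge\sqrt{8d}$ which lets one compare $d$ with $N$, that accounts for the $d^2$ appearing in \eqref{propn2condition}. One has to check that this change of coordinates does not damage the analytic estimate of the previous paragraph --- the entries stay analytic and bounded by $(M+1)^{O(T)}$-type quantities on the relevant disc, so a bound of the same shape survives.

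Finally I would combine the two estimates: they are contradictory as soon as $(AZ)^{\binom N2}$ exceeds an explicit product of powers of $M+1$, $H$, $d$ and $N$; dividing through, substituting $N=\binom{T+2}{2}$, and using $T\ge\sqrt{8d}$ to absorb the powers of $d$ rearranges this into precisely a sufficient condition of the form \eqref{propn2condition}. Hence under that hypothesis the rank cannot be $N$, and the desired polynomial $P$ of total degree at most $T$ exists. The step I expect to be the real obstacle is the arithmetic lower bound: securing a Liouville-type inequality with only a polynomial (here quadratic) power of $d$ is the whole content of the proposition, and it is what forces the passage to symmetric-function coordinates, together with the bookkeeping needed to keep the analytic bound intact.
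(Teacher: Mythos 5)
The paper does not prove this proposition: it is quoted verbatim from Masser~\cite{Masser} (Proposition 2 there), so there is no in-paper argument to compare against. I will therefore assess your sketch on its own terms.

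Your overall framework --- set up the $N\times N$ determinant $\Delta=\det(m_k(f_1(z_i),f_2(z_i)))$, get an analytic upper bound via Taylor expansion and a generalized Vandermonde contributing $(AZ)^{-\binom N2}$, and play this against an arithmetic lower bound --- is indeed the right shape, and you correctly identify the central obstacle: a naive Liouville estimate for $\Delta$ involves the degree of the compositum $\mathbb Q\bigl(f_1(z_1),f_2(z_1),\dots,f_1(z_N),f_2(z_N)\bigr)$, which may be as large as $d^N$ and would destroy any bound polynomial in $d$.

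The proposed fix, however, does not work. You suggest replacing each point $(f_1(z),f_2(z))$ by the elementary symmetric functions of its Galois conjugates (equivalently, the coefficients of its minimal polynomial), so that the determinant becomes a nonzero rational number, and you then assert that ``the entries stay analytic and bounded by $(M+1)^{O(T)}$-type quantities on the relevant disc, so a bound of the same shape survives.'' This is precisely where the argument breaks. The symmetric functions of the conjugates of $f_1(z)$ are not analytic in $z$; they are not even continuous functions of $z$. As $z$ varies, the algebraic number $f_1(z)$ moves continuously, but its Galois conjugates do not depend continuously on $z$, the degree $[\mathbb Q(f_1(z)):\mathbb Q]$ can jump, and the map $z\mapsto(\text{symmetric functions of the conjugates of }f_1(z))$ is only defined at the (isolated) algebraic points and is wildly discontinuous there. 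The entire analytic upper bound --- Cauchy estimates for Taylor coefficients on the disc $|z|\le 2Z$, the expansion of $\Delta$ as a sum over generalized Vandermondes in $(z_i-z_1)$, the geometric series --- hinges on the entries $g_k(z)$ being analytic and uniformly bounded on that disc, and this is lost. There is also a secondary problem: a nontrivial linear relation among monomials in the symmetric-function coordinates yields a polynomial in roughly $2(d+1)$ variables (the minimal-polynomial coefficients), not a polynomial $P(X,Y)$ of total degree $\le T$ vanishing at $(f_1(z),f_2(z))$, and there is no general way to descend from one to the other. So the crux you rightly flag --- securing a Liouville-type lower bound with only a polynomial power of $d$ while preserving the analytic upper bound --- is not resolved by the passage to symmetric functions, and the sketch as written has a genuine gap at exactly the step you yourself singled out as the real obstacle.
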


We now start working towards our bound on the radius. As in the previous section, let $P$ be the parallelogram with vertices $\frac{\pm \omega_1 \pm \omega_2}{2}$. Since $\lim\limits_{w\rightarrow 0}\frac{\sigma_\Omega(w)}{w}=1$ and $w=0$ is the only zero of $\sigma_\Omega$ in $P$, there is some $\delta>0$ such that $\left|\log\left|\sigma_\Omega(w)\right|-\log\left|w\right|\right|\leq 1$ whenever $\log|\sigma_\Omega(w)|\leq -\delta$. We fix some such $\delta$. Recall that $\tau=\frac{\omega_2}{\omega_1}$. 

\begin{lem}\label{common}
Assume $\text{Im}(\tau)\leq 1.9$. Let $d\geq 1$ and $H\geq e$. Suppose $z\in \mathbb{C}\setminus\Omega$ is such that $\sigma_{\Omega}(z)$ is algebraic with $H(\sigma_\Omega(z))\leq H$ and $[\mathbb{Q}(\sigma_{\Omega}(z)):\mathbb{Q}]\leq d$. Let $z_0\in P$ be such that $z-z_0\in \Omega$. Assume $\left|z\right|\geq r$, where $r$ is as in Proposition \ref{growth}. For all $B>0$ there exists $A>0$, depending only on $B, \omega_1$ and $\omega_2$, such that, for all $N\geq \sqrt{d\log H}$, if $\left|z\right|\geq AN$ then $\log\left|z_0\right|\leq -BN^2$. 
\end{lem}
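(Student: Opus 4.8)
The plan is to combine the growth estimate of Proposition~\ref{growth} with the elementary fact that an algebraic number which is very small in absolute value must have either large height or large degree. Suppose $z$ satisfies the hypotheses of the lemma, and let $z_0\in P$ with $z-z_0\in\Omega$. By Proposition~\ref{growth}, writing $c_0$ for the constant $c$ there (I rename it to avoid clashing with the eventual constant), we have
\begin{equation*}
\log\left|\sigma_\Omega(z_0)\right|\le\log\left|\sigma_\Omega(z)\right|-c_0\left|z\right|^2.
\end{equation*}
Now $\sigma_\Omega(z)$ is algebraic of degree at most $d$ and height at most $H$, so standard height inequalities give $\log\left|\sigma_\Omega(z)\right|\le d\log H$ (or a fixed multiple thereof, depending on the normalisation of the height), hence
\begin{equation*}
\log\left|\sigma_\Omega(z_0)\right|\le d\log H-c_0\left|z\right|^2.
\end{equation*}

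Next I would convert this bound on $\log\left|\sigma_\Omega(z_0)\right|$ into a bound on $\log\left|z_0\right|$. This is exactly what the parameter $\delta$ was introduced for: since $w\mapsto\sigma_\Omega(w)/w$ tends to $1$ as $w\to0$ and $\sigma_\Omega$ has no other zero in $P$, once $\log\left|\sigma_\Omega(z_0)\right|\le-\delta$ we have $\left|\log\left|\sigma_\Omega(z_0)\right|-\log\left|z_0\right|\right|\le1$, so $\log\left|z_0\right|\le\log\left|\sigma_\Omega(z_0)\right|+1$. Therefore, provided $d\log H-c_0\left|z\right|^2\le-\delta$ (which will be automatic once $\left|z\right|$ is large enough relative to $N\ge\sqrt{d\log H}$), we get
\begin{equation*}
\log\left|z_0\right|\le d\log H-c_0\left|z\right|^2+1.
\end{equation*}
Now substitute $\left|z\right|\ge AN$ and $d\log H\le N^2$: the right-hand side is at most $N^2-c_0A^2N^2+1$. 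Choosing $A$ large enough, depending only on $B$, $c_0$ and $\delta$ (and hence only on $B$, $\omega_1$, $\omega_2$), so that $c_0A^2\ge B+2$, say, and absorbing the additive constants using $N\ge1$, yields $\log\left|z_0\right|\le-BN^2$, as required. One also needs $\left|z\right|\ge r$ to apply Proposition~\ref{growth}, but that is among the hypotheses; if necessary one enlarges $A$ so that $AN\ge A\ge r$.

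The only genuinely delicate point is the use of $z\notin\Omega$ together with the degree and height data. The hypothesis guarantees $z_0\ne0$, so $\sigma_\Omega(z_0)\ne0$ and the logarithms make sense; but more importantly, without the height bound on $\sigma_\Omega(z)$ there would be nothing forcing $\left|\sigma_\Omega(z_0)\right|$ to be small, and the whole argument collapses. So the main thing to get right is the inequality $\log\left|\sigma_\Omega(z)\right|\le d\log H$ (or its correct constant) relating the archimedean absolute value of an algebraic number to its degree and multiplicative height, and then simply tracking constants so that the final choice of $A$ depends only on $B$, $\omega_1$, $\omega_2$. Everything else is bookkeeping: verifying that the side conditions ($\left|z\right|\ge r$, and $\log\left|\sigma_\Omega(z_0)\right|\le-\delta$ so that the $\delta$-comparison applies) hold once $\left|z\right|\ge AN$ with $A$ chosen large. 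I would present the argument in that order: apply Proposition~\ref{growth}, bound $\log\left|\sigma_\Omega(z)\right|$ by $d\log H$, check the resulting quantity is $\le-\delta$, invoke the $\delta$-comparison to pass to $\log\left|z_0\right|$, and finally choose $A$ to close the inequality.
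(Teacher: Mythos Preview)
Your proposal is correct and follows essentially the same route as the paper: apply Proposition~\ref{growth}, bound $\log|\sigma_\Omega(z)|$ by $d\log H\le N^2$, use the $\delta$-comparison to pass from $\log|\sigma_\Omega(z_0)|$ to $\log|z_0|$, and then choose $A$ large enough in terms of $B$, $c_0$ and $\delta$. The paper even gives the explicit choice $A=\max\{\sqrt{(1+\delta)/c},\sqrt{(2+B)/c}\}$, which matches your conditions.
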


\proof Let $B>0$, $A>0$ and $N\geq \sqrt{d\log H}$. Let $c$ be as in Proposition \ref{growth}. Then $$|\sigma_\Omega(z)|\geq |\sigma_\Omega(z_0)|e^{c|z|^2}.$$ Since $\sigma_\Omega(z)$ is algebraic with degree at most $d$ and height at most $H$, $|\sigma_\Omega(z)|\leq H^d$. So $$N^2\geq d\log H\geq\log\left|\sigma_\Omega(z)\right|\geq \log\left|\sigma_\Omega(z_0)\right| + c\left|z\right|^2.$$ If $\left|z\right|\geq AN$ then $$N^2\geq \log\left|\sigma_\Omega(z_0)\right| + cA^2N^2$$ and so $$\log\left|\sigma_\Omega(z_0)\right|\leq (1-cA^2)N^2$$ which implies $$\log\left|z_0\right|\leq (1-cA^2)N^2+1\leq (2-cA^2)N^2$$ if $A$ has been chosen so that $$1-cA^2\leq -\delta.$$ We obtain the result with $A=\max\left\{\sqrt{\frac{1+\delta}{c}},\sqrt{\frac{2+B}{c}}\right\}$. \endproof

The following lemma must be well known and is easily proved using the cosine rule. 

\begin{lem}\label{cosine}
There is a positive constant $c$, depending only on $\omega_1$ and $\omega_2$, such that for all $\omega\in \Omega$ there are integers $k, l$ such that $\left|k\right|\leq c\left|\omega\right|$, $\left|l\right|\leq c\left|\omega\right|$ and $\omega=k\omega_1+l\omega_2$. 
\end{lem}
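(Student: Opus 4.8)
The plan is to reduce the statement to an elementary planar-geometry fact about the parallelogram spanned by $\omega_1$ and $\omega_2$. Fix $\omega\in\Omega$ and write $\omega=k\omega_1+l\omega_2$ with $k,l\in\mathbb{Z}$; such $k,l$ exist and are uniquely determined because $\omega_1,\omega_2$ are $\mathbb{R}$-linearly independent. The claim is that $\max\{|k|,|l|\}\le c|\omega|$ for a constant $c$ depending only on $\omega_1,\omega_2$. Equivalently, writing $t=\omega/(k\omega_1+l\omega_2)$ trivially equal to $1$, we want a lower bound of the form $|k\omega_1+l\omega_2|\ge c^{-1}\max\{|k|,|l|\}$ valid for all real $k,l$ (not just integers), which by homogeneity is the same as saying that the continuous function $(k,l)\mapsto |k\omega_1+l\omega_2|$ is bounded below by a positive constant on the unit circle $\{\max\{|k|,|l|\}=1\}$.

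First I would make this precise: the function $g(k,l)=|k\omega_1+l\omega_2|$ is continuous on $\mathbb{R}^2$, and on the compact set $S=\{(k,l)\in\mathbb{R}^2 : \max\{|k|,|l|\}=1\}$ it attains a minimum $\mu$. Since $\omega_1,\omega_2$ are $\mathbb{R}$-linearly independent, $g(k,l)=0$ only at the origin, so $\mu>0$. Then for arbitrary real $k,l$ not both zero, scaling $(k,l)$ by $1/\max\{|k|,|l|\}$ onto $S$ gives $|k\omega_1+l\omega_2|\ge\mu\,\max\{|k|,|l|\}$. Taking $c=1/\mu$ (and noting the inequality is trivial when $k=l=0$), we get $|k|\le c|\omega|$ and $|l|\le c|\omega|$ for every $\omega=k\omega_1+l\omega_2\in\Omega$, which is exactly the assertion. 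The constant $c$ depends only on $\omega_1$ and $\omega_2$, as required.

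Alternatively, one can do this with the cosine rule explicitly, as the paper suggests. Write $\omega_2=|\omega_1||\tau|e^{i\theta_1}\cdot(\ldots)$ — more simply, let $\alpha$ be the angle between $\omega_1$ and $\omega_2$, which lies strictly between $0$ and $\pi$ since the two are $\mathbb{R}$-linearly independent. Then by the law of cosines applied to the triangle with sides $k\omega_1$, $l\omega_2$,
\[
|k\omega_1+l\omega_2|^2 = k^2|\omega_1|^2 + l^2|\omega_2|^2 + 2kl|\omega_1||\omega_2|\cos\alpha.
\]
Using $2|kl|\,|\omega_1||\omega_2|\le \rho\,k^2|\omega_1|^2+\rho^{-1}l^2|\omega_2|^2$ for a suitable $\rho$ chosen with $\rho|\cos\alpha|<1$ and $\rho^{-1}|\cos\alpha|<1$ (possible precisely because $|\cos\alpha|<1$), one bounds the right-hand side below by $(1-\rho|\cos\alpha|)k^2|\omega_1|^2$ and symmetrically by $(1-\rho^{-1}|\cos\alpha|)l^2|\omega_2|^2$; taking the maximum of these two lower bounds and then a square root yields $|k\omega_1+l\omega_2|\ge c^{-1}\max\{|k|,|l|\}$ with $c$ depending only on $|\omega_1|,|\omega_2|,\alpha$, hence only on $\omega_1,\omega_2$.

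There is no real obstacle here: the only point requiring a moment's care is that the bound must be uniform in $\omega$, which is handled either by the compactness argument on the unit square or by making the constant in the cosine-rule estimate explicit and independent of $k,l$. I would present the compactness version as the clean proof and perhaps remark that the cosine rule gives an explicit constant; either way the lemma follows immediately.
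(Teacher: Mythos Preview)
Your proposal is correct. The paper does not actually prove the lemma; it merely states that it ``must be well known and is easily proved using the cosine rule,'' and your second argument carries this out explicitly (with $\rho=1$ giving the cleanest constant $c=\max\{|\omega_1|^{-1},|\omega_2|^{-1}\}(1-|\cos\alpha|)^{-1/2}$), while your compactness argument is a clean equivalent.
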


\begin{prop}\label{applgrowth}
Assume $\text{Im}(\tau)\leq 1.9$ and both $\omega_1$ and $\omega_2$ are algebraic. Let $d\geq 1$ and $H\geq e$. Suppose $z\in \mathbb{C}\setminus\Omega$ is such that both $z$ and $\sigma_{\Omega}(z)$ are algebraic with $H(z,\sigma_\Omega(z))\leq H$ and $[\mathbb{Q}(z,\sigma_{\Omega}(z)):\mathbb{Q}]\leq d$. There exists $A>0$, depending only on $\omega_1$ and $\omega_2$, such that $|z|\leq A d\sqrt{\log H}$. 
\end{prop}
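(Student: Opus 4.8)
The plan is to combine Lemma~\ref{common} with Lemma~\ref{cosine} and a height–degree argument to rule out the troublesome case in which $z$ is close to, but not equal to, a lattice point. Suppose for contradiction that $|z|$ is much larger than $d\sqrt{\log H}$; I would set $N=\sqrt{d\log H}$ (assuming $d\ge 1$ and $H\ge e$ so that $N\ge 1$), so that the hypothesis of Lemma~\ref{common} is met. Fix $z_0\in P$ with $\omega:=z-z_0\in\Omega$. Choosing a suitably large $B$ in Lemma~\ref{common} (to be pinned down at the end), we get a constant $A_0$, depending only on $\omega_1,\omega_2$, such that $|z|\ge A_0 N$ forces $\log|z_0|\le -BN^2$, i.e.\ $z_0$ is extremely close to $0$ --- in particular $|z|$ and $|\omega|$ are comparable, so $|z_0|\le e^{-B|z|^2/A_0^2}$ roughly. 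The key point is that $z = z_0 + \omega$ with $\omega=k\omega_1+l\omega_2$ for integers $k,l$ with $|k|,|l|\le c|\omega|\le c'|z|$ by Lemma~\ref{cosine}.

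The main obstacle, and the heart of the argument, is now to extract a contradiction from the fact that $z$ is algebraic of degree $\le d$ and height $\le H$ while being exponentially close to the algebraic number $\omega = k\omega_1+l\omega_2$. I would argue as follows. The number $w:=z-\omega=z_0$ is algebraic, lies in $\mathbb{Q}(z,\omega_1,\omega_2)$, hence has degree bounded by $c_1 d$ (where $c_1$ depends only on $[\mathbb{Q}(\omega_1,\omega_2):\mathbb{Q}]$). Its height can be bounded using standard height inequalities: $H(w)\le H(z)\cdot H(\omega)\cdot 2^{\deg}$, and $H(\omega)=H(k\omega_1+l\omega_2)$ is at most a fixed power of $\max\{|k|,|l|\}$ times a constant depending on $\omega_1,\omega_2$, hence at most $(c_2|z|)^{c_3 d}$. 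So $H(w)\le H\cdot(c_2|z|)^{c_3 d}$ and $\deg(w)\le c_1 d$. Now if $w\ne 0$, the fundamental inequality (Liouville: $|w|\ge H(w)^{-\deg(w)}$ up to constants, or more simply $\log|w|\ge -c_4 d\log H(w)$) gives a \emph{lower} bound $\log|z_0|=\log|w|\ge -c_5 d\big(\log H + d\log(c_2|z|)\big)$. Comparing with the upper bound $\log|z_0|\le -B N^2 = -Bd\log H$ from Lemma~\ref{common}, and recalling $|z|\le(\text{something polynomial})$, I must check these are incompatible once $|z|\ge A d\sqrt{\log H}$ for $A$ large enough; this forces choosing $B$ large relative to the constants $c_5,c_2,c_3$ and checking that the $d^2\log|z|$ term on the right is dominated, which works provided $\log|z|$ is, say, at most a fixed power of $d\log H$ --- and this holds because we may first invoke Theorem~\ref{Besson} or a trivial bound to know $|z|$ is at worst of moderate size, or more cleanly because the inequality $|z|\le Ad\sqrt{\log H}$ we are proving is self-improving: assume $|z|\ge Ad\sqrt{\log H}$ with $A$ large; since separately $|z|$ cannot be astronomically large (the exponential growth of $\sigma_\Omega$ forces $c|z|^2\le d\log H$ up to the $\log|\sigma_\Omega(z_0)|$ term, giving $|z|\le c_6\sqrt{d\log H}\cdot(\text{correction})$), the right-hand side stays polynomially bounded and the contradiction goes through.

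Putting it together: if $w=0$ then $z=\omega\in\Omega$, contradicting $z\notin\Omega$. If $w\ne 0$ the two bounds on $\log|z_0|$ collide for $A$ sufficiently large in terms of $\omega_1,\omega_2$ (and $\delta,c$ from Proposition~\ref{growth}), so no such $z$ with $|z|> Ad\sqrt{\log H}$ exists. Hence every algebraic $z\notin\Omega$ with $\sigma_\Omega(z)$ algebraic, $[\mathbb{Q}(z,\sigma_\Omega(z)):\mathbb{Q}]\le d$ and $H(z,\sigma_\Omega(z))\le H$ satisfies $|z|\le Ad\sqrt{\log H}$, which is the claim; I should also dispose of the case $|z|<r$ (covered trivially by enlarging $A$ since $r$ is a fixed constant) and the case $|z|<A_0N$ (again absorbed into the constant). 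The only genuinely delicate bookkeeping is tracking how the height of $\omega=k\omega_1+l\omega_2$ depends on $|z|$ and ensuring the resulting $d^2\log|z|$ term is controlled; everything else is routine use of Lemma~\ref{common}, Lemma~\ref{cosine}, and elementary height inequalities.
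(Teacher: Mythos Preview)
Your approach is essentially the paper's: combine Lemma~\ref{common} with a Liouville lower bound on $|z_0|=|z-\omega|$, using that $z_0$ is a nonzero algebraic number of controlled height and degree. Two bookkeeping slips, however, prevent the argument from closing as written.

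First, your height estimate $H(\omega)\le(c_2|z|)^{c_3 d}$ is too pessimistic and is the source of the ``delicate $d^2\log|z|$ term'' you worry about. Since $\omega=k\omega_1+l\omega_2$ with $k,l\in\mathbb{Z}$, the degree of $\omega$ over $\mathbb{Q}$ is at most the fixed constant $[\mathbb{Q}(\omega_1,\omega_2):\mathbb{Q}]$, and its height satisfies $H(\omega)\le 2|k|\,|l|\,H(\omega_1)H(\omega_2)$ with no $d$ in the exponent. Using $|k|,|l|\le c|z|\le cH^d$ (the latter from $H(z)\le H$, $[\mathbb{Q}(z):\mathbb{Q}]\le d$) one gets $\log H(z_0)\le(2d+1)\log H+c$ and $[\mathbb{Q}(z_0):\mathbb{Q}]\le c_3 d$, hence by Liouville $\log|z_0|\ge -c'd^2\log H$. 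Your version inserts a spurious factor of $d$, yielding $\log|z_0|\ge -c'd^3\log H$, which no fixed $B$ in Lemma~\ref{common} can beat.

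Second, you must take $N=d\sqrt{\log H}$ in Lemma~\ref{common}, not $N=\sqrt{d\log H}$: this is both the natural choice (the target is $|z|\le Ad\sqrt{\log H}$, i.e.\ $|z|\le AN$) and the one that makes $-BN^2=-Bd^2\log H$ strictly smaller than the Liouville bound $-c'd^2\log H$ once $B>c'$. With $N=\sqrt{d\log H}$ the upper bound is only $-Bd\log H$, which does not contradict $-c'd^2\log H$ for any constant $B$. With these two corrections the contradiction is immediate and your ``self-improving'' discussion becomes unnecessary: the a~priori bound $|z|\le H^d$ already controls $|k|,|l|$ directly, and no bootstrapping is needed.
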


\proof The constants $c_1,c_2,c_3$ in this proof will only depend on $\omega_1$ and $\omega_2$. Choose $z_0\in P$ such that $z-z_0\in\Omega$ and let $\omega=z-z_0$. Then $\left|\omega\right|\leq \left|z\right|+\left|\omega_1\right|+\left|\omega_2\right|$. Combining this with Lemma \ref{cosine}, we obtain $c_1>0$ and integers $k$ and $l$ such that $\omega=k\omega_1+l\omega_2$ and $\left|k\right|,\left|l\right|\leq c_1\left|z\right|$.  

Since $z_0=z-\omega$, we have $$H(z_0)\leq 2H(z)H(\omega)\leq 4H(z)H(k)H(\omega_1)H(l)H(\omega_2).$$ Since $H(z)\leq H$, $[\mathbb{Q}(z):\mathbb{Q}]\leq d$, $H(k)=\left|k\right|$ and $H(l)=\left|l\right|$, we have $\left|z\right|\leq H^d$ and then $H(k),H(l)\leq c_1H^d$. So there exists $c_2>0$ such that $$\log (H(z_0))\leq (2d+1)\log H+c_2.$$ The degree of $\omega$ is at most $[\mathbb{Q}(\omega_1,\omega_2):\mathbb{Q}]$ and so $$[\mathbb{Q}(z_0):\mathbb{Q}]=[\mathbb{Q}(z-\omega):\mathbb{Q}]\leq [\mathbb{Q}(\omega_1,\omega_2):\mathbb{Q}]d.$$ Letting $c_3=[\mathbb{Q}(\omega_1,\omega_2):\mathbb{Q}]$, it follows that $$\log\left|z_0\right|\geq -[\mathbb{Q}(z_0):\mathbb{Q}]\log(H(z_0))\geq -c_3d((2d+1)\log H+c_2)> -3c_3(1+c_2)d^2\log H.$$ Letting $B=3c_3(1+c_2)$ and $N=d\sqrt{\log H}$, the result follows by Lemma \ref{common} (assuming, as we may, that $\left|z\right|$ is at least the $r$ of Proposition \ref{growth}). \endproof

We can now complete the proof of Theorem \ref{alglattice}. We apply Proposition \ref{propn2} with $f_1(z)=z$ and $f_2(z)=\sigma_\Omega(z)$. We set $Z= 4 A d\sqrt{\log H}$, with the $A$ from Proposition \ref{applgrowth}, and put
\[
\mathcal Z = \{ z \in \mathbb C\setminus\Omega : [\mathbb Q(z,\sigma_\Omega(z)) : \mathbb Q]\le d , H(z,\sigma_\Omega(z) ) \le H\}.
\] 
By Proposition \ref{applgrowth}, we have $|z|\le Z$ for all $z\in \mathcal Z$. We take $A$ in Proposition \ref{propn2} to be $2/Z$. From the identity $$\sigma_{\Omega}(z_0+m\omega_1+n\omega_2)=(-1)^{m+n+mn}\sigma_{\Omega}(z_0)e^{(m\eta_1+n\eta_2)(z_0+\frac{m}{2}\omega_1 + \frac{n}{2}\omega_2)}$$ (see (20.21) on page 255 of \cite{MasserBook}) and Lemma \ref{cosine}, there exist positive constants $c_1$ and $c_2$ such that $\left|\sigma_\Omega(z)\right|\leq c_1e^{c_2\left|z\right|^2}$ for all $z\in\mathbb{C}$. We set $M = c_1e^{c_2 Z^2}$. With these choices, (a) - (d) of Proposition \ref{propn2} hold. If we take $T=c_3 d^3 \log H$, for a suitable $c_3>0$, then we will have \eqref{propn2condition}. We then apply Besson's zero estimate, Theorem \ref{Besson_zero}, to conclude.

\section{When $g_2$ and $g_3$ are algebraic}

In this section we prove Theorem \ref{alggs}. Our approach is similar to that followed in \S 3 and we again use Theorem \ref{Besson_zero}. We suppose throughout that $g_2$ and $g_3$ are algebraic. First, we require a transcendence measure, a very special case of a theorem of David and Hirata-Kohno.
\begin{lem}\label{measure}There is a positive $c$, effectively computable from $g_2$ and $g_3$, with the following property. Let $H\ge 3, d\ge 1$ and suppose $\alpha$ is algebraic, with $H(\alpha)\le H$ and $[\Q(\alpha):\Q]\le d$. If $\omega\in\Omega$ is nonzero then 
\[
 \log |\alpha-\omega| \ge -c d^4 (\log d)^2 (\log H) |\omega|^2 (1+\max\{0,\log |\omega|\})^3.
 \]
 \end{lem}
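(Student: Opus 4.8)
The plan is to deduce this from a \emph{linear form in elliptic logarithms} estimate of David and Hirata--Kohno, applied to the Weierstrass $\wp$-function $\wp_\Omega$ attached to $\Omega$ (with invariants $g_2,g_3$). The point is that $\wp_\Omega$ has a pole at each $\omega\in\Omega$, and near such a pole $\wp_\Omega(z)\sim (z-\omega)^{-2}$, so controlling $|\alpha-\omega|$ from below amounts to controlling $|\wp_\Omega(\alpha)|$ from above, or more precisely to a lower bound on the distance from the point $(\wp(\alpha),\wp'(\alpha))$ on the elliptic curve to the identity of the relevant period lattice. Since $g_2,g_3$ are algebraic, the associated elliptic curve $E$ is defined over $\overline{\Q}$, and $\omega$ is a nonzero period, i.e. $u=\omega$ is an elliptic logarithm of the origin (a nonzero period of $E$). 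The David--Hirata-Kohno theorem then gives, for the nonzero ``linear form'' $\alpha - \omega$ (thought of as $\beta_0 u_0 + \beta_1\omega$ with $u_0$ an elliptic log of a point and suitable algebraic $\beta_i$), a lower bound of shape $-c\,(\text{deg})^{?}(\log(\text{deg}))^{?}(\log H)\cdot(\text{height of }\omega)^{?}$, and the numerology $d^4(\log d)^2(\log H)$ together with the period-height factor $|\omega|^2(1+\max\{0,\log|\omega|\})^3$ is exactly what falls out of the standard statement once one records that the height of the period $\omega$ is comparable to $\log|\omega|$ (up to a constant depending on $\Omega$) and the ``analytic'' size parameter is $\max\{1,|\omega|\}$.

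Concretely, the steps I would carry out are: (i) fix the elliptic curve $E$ over the number field $\Q(g_2,g_3)$ and fix an explicit Weierstrass embedding, noting that $\wp,\wp'$ have algebraic values at algebraic arguments only on a measure-zero set, so we really must phrase things via the transcendence measure rather than via special values; (ii) reduce the desired inequality to a lower bound on $|\Lambda|$ where $\Lambda = \alpha - \omega$ is viewed as an elliptic logarithm combination, using that $\omega\in\Omega$ is a period and $\alpha$ is an arbitrary algebraic number playing the role of the ``unknown'' elliptic logarithm together with a rational/algebraic coefficient $1$; (iii) quote the David--Hirata-Kohno measure in the precise form (as in David's mémoire, or Hirata-Kohno's refinements) with parameters: the degree $D$ of the field generated over $\Q$ by everything in sight is $O(d)$ since $g_2,g_3$ are fixed; the heights of the algebraic data are $O(\log H)$ from $H(\alpha)\le H$ and $O(\log|\omega|)$ from the period, contributing the $|\omega|^2(1+\max\{0,\log|\omega|\})^3$ factor once the theorem's $E$-parameter and height terms are inserted; (iv) track the powers of $\log D$ carefully — this is where the $(\log d)^2$ arises — and absorb all $\Omega$-dependent quantities into the single effective constant $c$.

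The main obstacle I expect is matching the exact shape of the bound. The David--Hirata-Kohno estimates are typically stated with a somewhat elaborate combination of parameters (a height parameter $h$, a parameter $E\ge e$ measuring the ``domain'' in which the archimedean size of the linear form is controlled, and a log-degree factor), and the art is in choosing these parameters so that the resulting bound collapses to $-c\,d^4(\log d)^2(\log H)\,|\omega|^2(1+\max\{0,\log|\omega|\})^3$ with nothing worse. In particular one must check that the period $\omega$ contributes only polynomially in $|\omega|$ and $\log|\omega|$ (not, say, an extra $\log$ of $\log|\omega|$), which requires using that a period of a fixed lattice has Weil height $\ll 1 + \log|\omega|$ via Lemma~\ref{cosine} (write $\omega = k\omega_1 + l\omega_2$ with $|k|,|l|\ll|\omega|$). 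A secondary subtlety is that $\alpha$ ranges over \emph{all} algebraic numbers of bounded height and degree, not merely those for which $\wp(\alpha)$ is algebraic; but this is precisely why we invoke a transcendence \emph{measure} (a lower bound valid for every algebraic $\alpha$) rather than an $S$-unit or special-value statement, and the cited theorem is of exactly that type. Finally, one should double-check the normalisation conventions (David's mémoire uses $g_2,g_3$ and a Weierstrass model; some sources use a different lattice normalisation), but since we only need the bound up to an effective constant depending on $g_2,g_3$, such discrepancies are harmless.
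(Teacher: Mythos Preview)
Your approach matches the paper's: the lemma is obtained by direct citation of the David--Hirata-Kohno theorem on linear forms in elliptic logarithms, applied to the single form $\Lambda=\alpha-\omega$ where $\omega$ is a period of the elliptic curve with invariants $g_2,g_3$ (note that $\alpha$ enters as the algebraic constant term $\beta_0$, not as an elliptic logarithm; the only elliptic logarithm is $\omega$ itself, with coefficient $-1$). The paper's entire proof is the sentence ``This is immediate from Theorem 1.6 in \cite{DavidHirataKohno}'', so your parameter-matching discussion goes beyond what is written there but is in the right spirit.
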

 \begin{proof}  This is immediate from Theorem 1.6 in \cite{DavidHirataKohno}.
\end{proof}

We now obtain a result similar to Proposition \ref{applgrowth}. This time we allow for some exceptions but can ensure that the set of exceptional $z\in\mathbb{C}$ has $\mathbb{Q}$-linear dimension at most one.

\begin{prop}\label{applgrowth2}
Assume $\text{Im}(\tau)\leq 1.9$. There are positive constants $A$ and $B$ such that, for all $d,H\geq e$ and $z,z'\in \mathbb{C}$, if $z,z',\sigma_\Omega(z)$ and $\sigma_\Omega(z')$ are algebraic, with $H(z,\sigma_\Omega(Z)), H(z',\sigma_\Omega(z'))\leq H$ and $[\mathbb{Q}(z,\sigma_\Omega(z)):\mathbb{Q}], [\mathbb{Q}(z',\sigma_\Omega(z')):\mathbb{Q}]\leq d$, then $$\min\{|z|,|z'|\}\leq A\sqrt{d^9(\log d)^2\log H}$$ or there exist $\omega,\omega'\in\Omega$ such that $$\max\{\log|z-\omega|,\log|z'-\omega'|\}<-Bd^9(\log d)^2\log H\text{ and }\frac{z'}{z}=\frac{\omega'}{\omega}\in\mathbb{Q}.$$
\end{prop}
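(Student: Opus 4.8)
The plan is to mimic the proof of Proposition \ref{applgrowth}, but with Lemma \ref{measure} playing the role of the trivial height-degree bound $\log|z_0|\ge -[\mathbb{Q}(z_0):\mathbb{Q}]\log H(z_0)$ that was available when the lattice points were algebraic. The key new phenomenon is that, with $g_2,g_3$ algebraic, there is no reason for $z$ or $z_0=z-\omega$ to be algebraic, so we cannot bound $\log|z_0|$ from below directly; instead we have the transcendence measure of David and Hirata-Kohno, which bounds $\log|\alpha-\omega|$ from below for $\alpha$ algebraic and $\omega\in\Omega^*$. The point is that $\sigma_\Omega$ has its zeros exactly at $\Omega$, and $z_0\in P$ lies near $0$, so $\log|\sigma_\Omega(z_0)|$ is controlled by $\log|z_0|$ (via the $\delta$ fixed before Lemma \ref{common}), which is in turn controlled by $\log|z-\omega|$ with $\omega$ the lattice point nearest $z$.

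First I would set up the growth estimate exactly as in Lemma \ref{common}: for $z$ with $|z|\ge r$, pick $z_0\in P$ with $\omega:=z-z_0\in\Omega$, and from $|\sigma_\Omega(z)|\ge|\sigma_\Omega(z_0)|e^{c|z|^2}$ together with $|\sigma_\Omega(z)|\le H^d$ deduce $\log|\sigma_\Omega(z_0)|\le d\log H-c|z|^2$, hence (if $|z|$ is large compared to $\sqrt{d\log H}$) $\log|z_0|$ is very negative, say $\le -BN^2$ for $N=\sqrt{d\log H}$ and a suitable $B$ depending on how large we force $|z|$ to be. So far this is Lemma \ref{common} verbatim. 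The difference is in what we do with the conclusion: since $z_0=z-\omega$ is close to $0$, and $\omega$ is a nonzero lattice point (it is nonzero once $|z|$ exceeds the diameter of $P$), I apply Lemma \ref{measure} with $\alpha=z$ (which is algebraic, of degree $\le d$ and height $\le H$) to get
\[
\log|z-\omega|\ge -c d^4(\log d)^2(\log H)|\omega|^2(1+\max\{0,\log|\omega|\})^3 .
\]
Now $|\omega|\le|z|+|z_0|\le|z|+O(1)$, so if $|z|\le A\sqrt{d^9(\log d)^2\log H}$ fails, one still gets a polynomial-in-$(d,\log H,\log\log H,\ldots)$ bound; I would choose the exponent $9$ precisely so that $d^4(\log d)^2|\omega|^2$ absorbs into $d^9(\log d)^2$ when $|\omega|$ is polynomially bounded, the factor $|\omega|^2$ (which is about $d^9(\log d)^2\log H$) pushing the $d^4$ up toward $d^9$ while the logarithmic factors $(1+\log|\omega|)^3$ get swallowed. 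Comparing the lower bound on $\log|z-\omega|=\log|z_0|$ from Lemma \ref{measure} with the upper bound $\log|z_0|\le-BN^2=-Bd\log H$ would then be contradictory once $|z|$ is large enough — \emph{unless} $|\omega|$ itself is large, i.e. $|z|$ is large. That is the mechanism forcing $|z|$ (equivalently $|\omega|$) to grow, and I would iterate/bootstrap the inequality $|\omega|^2\lesssim d^4(\log d)^2(\log H)|\omega|^2(\cdots)^3$ to pin down the correct threshold; this is where the choice of the constant $A$ and the exponent in $\sqrt{d^9(\log d)^2\log H}$ comes from.

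The reason the statement is for \emph{pairs} $z,z'$ and has an exceptional alternative is the following: Lemma \ref{measure}, unlike the crude bound in Proposition \ref{applgrowth}, cannot by itself rule out $z$ being extremely close to a single lattice point while remaining algebraic of bounded height and degree — a single very good rational approximation is not excluded. But two such $z,z'$ with $z$ near $\omega$ and $z'$ near $\omega'$ are over-determined: if both $\log|z-\omega|$ and $\log|z'-\omega'|$ are smaller than $-Bd^9(\log d)^2\log H$, then $z$ and $z'$ are pinned down to such high precision that a resultant/Liouville-type argument on the algebraic number $\omega z'-\omega'z$ (or $z/z' - \omega/\omega'$) forces it to vanish, giving $z'/z=\omega'/\omega\in\mathbb{Q}$ — exactly the second alternative. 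Concretely, $\omega z'-\omega'z = \omega(z'-\omega') - \omega'(z-\omega)$ is algebraic of controlled degree and height (the $\omega,\omega'$ have integer coordinates of size polynomial in $|z|,|z'|$ by Lemma \ref{cosine}), so its absolute value, if nonzero, is bounded below by a Liouville inequality $-[\mathbb{Q}(\ldots):\mathbb{Q}]\log H(\ldots)$; but it is also bounded above by $|\omega||z'-\omega'|+|\omega'||z-\omega|$, which by the assumed smallness is far below that Liouville bound, a contradiction. Hence it must be zero. I would carry the steps in the order: (1) growth estimate giving $\log|z_0|\le -Bd\log H$ when $|z|\gtrsim\sqrt{d^9(\log d)^2\log H}$; (2) apply Lemma \ref{measure} to $z$ and to $z'$ to lower-bound $\log|z-\omega|,\log|z'-\omega'|$; (3) combine (1) and (2): either $|z|$ (or $|z'|$) is below the threshold, or both $\log|z-\omega|,\log|z'-\omega'|$ are forced below $-Bd^9(\log d)^2\log H$ (absorb constants, using $|\omega|,|\omega'|$ polynomially bounded from the threshold and Lemma \ref{cosine}); (4) in that remaining case, run the Liouville argument on $\omega z'-\omega'z$ to conclude $z'/z=\omega'/\omega\in\mathbb{Q}$. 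The main obstacle I expect is the careful bookkeeping in step (3): making the polynomial powers of $d$ and $\log d$, together with the nested logarithm $(1+\log|\omega|)^3$ coming from Lemma \ref{measure} and the $|\omega|^2$ factor, all collapse cleanly into $d^9(\log d)^2\log H$ — one has to fix the threshold exponent and the constants $A,B$ so that the two bounds genuinely contradict each other outside the stated exceptional set, and verify that $\omega$ is nonzero (i.e. $z\notin P$, which holds once $|z|$ exceeds the diameter of $P$, absorbed into the threshold) so that Lemma \ref{measure} is applicable at all.
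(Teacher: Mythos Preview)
Your step (4) has a genuine gap: the quantity $\omega z'-\omega' z$ is \emph{not} algebraic. With $g_2,g_3$ algebraic the nonzero lattice points $\omega,\omega'$ are transcendental (Schneider), so a Liouville bound is unavailable, and Lemma \ref{measure} does not apply either since $\omega z'-\omega' z$ is neither an algebraic number nor a lattice point. The same objection applies to $z/z'-\omega/\omega'$: unless $\omega/\omega'\in\mathbb{Q}$ (which is exactly what you are trying to prove), this ratio lies in $\mathbb{Q}(\tau)$ and is transcendental in the non-CM case. Your step (2) is also not doing useful work: applying Lemma \ref{measure} directly to the pair $(z,\omega)$ gives a lower bound $-cd^4(\log d)^2(\log H)\,|\omega|^2(1+\log|\omega|)^3$ with $|\omega|\sim|z|$, and since this is \emph{more} negative than the growth upper bound $\log|z_0|\lesssim -c|z|^2$, no contradiction arises and no bootstrap is possible. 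The dichotomy ``$|z|$ small or $\log|z-\omega|$ very negative'' already follows from Lemma \ref{common} alone with $N=\sqrt{d^9(\log d)^2\log H}$.

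The paper's key idea, which you are missing, is to reduce to a \emph{fixed} lattice point before invoking Lemma \ref{measure}. If $\omega'/\omega\notin\mathbb{Q}$ then $\omega,\omega'$ generate $\Omega$ over $\mathbb{Q}$, so $\omega_1=a\omega+b\omega'$ with $a,b\in\mathbb{Q}$ of controlled height (via Lemma \ref{cosine} and Cramer's rule). Now $az+bz'$ \emph{is} algebraic (rationals times algebraics), of degree $\le d^2$ and log-height $O(d\log H)$, and $|az+bz'-\omega_1|\le|a||z-\omega|+|b||z'-\omega'|$ is tiny. Applying Lemma \ref{measure} to the pair $(az+bz',\omega_1)$ gives a lower bound in which the dangerous $|\omega|^2$ factor is replaced by the constant $|\omega_1|^2$; this is what produces the exponent $d^9$ (namely $(d^2)^4\cdot d$ from degree $d^2$ and log-height $\sim d\log H$) and yields a genuine contradiction for $B$ large. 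Once $\omega'/\omega\in\mathbb{Q}$ is established, your Liouville argument on $(\omega'/\omega)z-z'$ becomes legitimate, since this quantity is now algebraic; the paper finishes exactly that way.
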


\proof Let $d, H\geq e$ and let $z,z'\in\mathbb{C}$ be such that $H(z,\sigma_\Omega(Z)), H(z',\sigma_\Omega(z'))\leq H$ and $[\mathbb{Q}(z,\sigma_\Omega(z)):\mathbb{Q}], [\mathbb{Q}(z',\sigma_\Omega(z')):\mathbb{Q}]\leq d$. Choose $z_0, z_0'\in P$ so that $z-z_0,z'-z_0'\in\Omega$. With $r$ as in Proposition \ref{growth}, we may assume $\left|z\right|,\left|z'\right|\geq r$. Applying Lemma \ref{common}, with any $B>0$ and with $N=\sqrt{d^9(\log d)^2\log H}$, we obtain $A>0$ such that $$\min\{\left|z\right|,\left|z'\right|\}\leq A\sqrt{d^9(\log d)^2\log H}$$ or $$\max\{\log\left|z_0\right|,\log\left|z_0'\right|\}\leq-Bd^9(\log d)^2\log H.$$ At some points in the argument we shall want $B$ to be larger than certain values computable from $g_1$ and $g_2$, and may assume that this is the case.

If $\min\{|z|,|z'|\}> A\sqrt{d^9(\log d)^2\log H}$ then we have $\omega,\omega'\in\Omega$ such that $$\log|z-\omega|=\log|z_0|\leq-Bd^9(\log d)^2\log H$$ and $$\log|z'-\omega'|=\log\left|z_0\right|\leq-Bd^9(\log d)^2\log H.$$ We may assume $\omega\omega'\neq 0$. We show that $\frac{\omega'}{\omega}\in\mathbb{Q}$. Suppose not. Then $\frac{\omega'}{\omega}\notin\mathbb{R}$ and we have some $a,b\in \mathbb{Q}$ such that $a\omega+b\omega'=\omega_1$. We want to bound the height of $a$ and $b$ and for this we consider how they are obtained. By Lemma \ref{cosine}, we have $k\omega_1+l\omega_2=\omega$ and $k'\omega_1+l'\omega_2=\omega'$ for some integers $k,l,k',l'$ such that such that $\left|k\right|,\left|l\right|\leq c\left|\omega\right|$ and $\left|k'\right|,\left|l'\right|\leq c\left|\omega'\right|$, where $c$ is positive and depends only on $g_1$ and $g_2$. We may assume $c\geq 1$. Since $\frac{\omega'}{\omega}\notin\mathbb{R}$, $kl'-lk'\neq 0$.

We then have $a=\frac{l'}{kl'-lk'}$ and $b=\frac{-l}{kl'-lk'}$. It follows that $$H(a,b)\leq 2c^2\max\{1,\left|\omega\right|\}\max\{1,\left|\omega'\right|\}.$$ Since $z$ and $z'$ have height at most $H$ and degree at most $d$, we have $\left|z\right|,\left|z'\right|\leq H^d$. We also have $\left|z-\omega\right|<1$ and $\left|z'-\omega'\right|<1$ and so $\left|\omega\right|\leq \left|z\right|+1$ and $\left|\omega'\right|\leq \left|z'\right|+1$. Then $$H(a,b)\leq 2c^2(H^d+1)^2\leq 8c^2H^{2d}$$ and $$H(\left|a\right|+\left|b\right|)\leq 16c^2H^{2d}.$$

Now $$\log|(az+bz')-\omega_1|=\log\left|a(z-\omega)+b(z'-\omega')\right|$$ $$\leq\log(|a|+|b|)-Bd^9(\log d)^2\log H$$ $$\leq \log(16c^2)+2d\log H-Bd^9(\log d)^2\log H.$$ 

We have $H(az+bz')\leq 2(8c^2)^2H^{4d+2}$ and $[\mathbb{Q}(az+bz'):\mathbb{Q}]\leq d^2$. By Lemma \ref{measure} there is a positive constant $c'$, depending effectively on $g_1$ and $g_2$, such that $$\log |(az+bz')-\omega_1|\ge -c' d^9(\log d)^2 \log H |\omega_1|^2 (1+\max\{0,\log |\omega_1|\})^3.$$ We have a contradiction given that $B$ is large enough. So $\frac{\omega'}{\omega}\in\mathbb{Q}$. 

We complete the proof by showing that $\frac{z'}{z}=\frac{\omega'}{\omega}$. Since $\omega=k\omega_1+l\omega_2$, $\omega'=k'\omega_1+l'\omega_2$ and $\omega_1$ and $\omega_2$ are $\mathbb{Q}$-linearly independent, the rationality of $\frac{\omega'}{\omega}$ implies that there exists $q\in \mathbb{Q}$ such that $qk=k'$ and $ql=l'$. In fact, $q=\frac{\omega'}{\omega}$. We then have $$H\left(\frac{\omega'}{\omega}\right)\leq\max\{\left|k\right|,\left|k'\right|\}\leq c(\max\{\left|z\right|,\left|z'\right|\}+1)\leq c(H^d+1).$$

Assuming without loss of generality that $|\omega|\geq|\omega'|$, note that $$\left|\frac{\omega' z}{\omega}-z'\right|= \left|\frac{\omega' z}{\omega}-\omega'+\omega'-z'\right|\leq\frac{\left|\omega'\right|}{\left|\omega\right|}\left|z-\omega\right|+\left|\omega'-z'\right|\leq \left|z-\omega\right|+\left|z'-\omega'\right|$$ and so $$\log\left|\frac{\omega' z}{\omega}-z'\right|\leq \log{2}-Bd^9(\log d)^2\log H.$$ But $\frac{\omega' z}{\omega}-z'$ is algebraic with $$H\left(\frac{\omega' z}{\omega}-z'\right)\leq 2H\left(\frac{\omega'}{\omega}\right)H(z)H(z')\leq 2c(H^d+1)H^2$$ and $$\left[\mathbb{Q}\left(\frac{\omega' z}{\omega}-z'\right):\mathbb{Q}\right]\leq d^2.$$ Given that $B$ is large enough, we must have $\frac{\omega'z}{\omega}-z'=0$ and so $\frac{z'}{z}=\frac{\omega'}{\omega}$. \endproof

Proposition \ref{applgrowth2} is almost enough for our purposes. To finish the proof of Theorem \ref{alggs}, we bound the cardinality of the exceptional set
\[
\mathcal{E}=\{ z \in \mathbb{C} : H(z,\sigma_\Omega(z) )\leq H, [\mathbb Q(z,\sigma_\Omega(z)) :\mathbb Q] \le d, |z| > \max\{r,A \sqrt{ d^9 (\log d)^2 \log H}\}\}
\]
where $d,H\geq e$, $A$ is as in Proposition \ref{applgrowth2} and $r$ is as in Proposition \ref{growth}. Suppose that $z,z'\in\mathcal{E}$ and let $\omega,\omega'\in \Omega$ be such that $\max\{\log|z-\omega|,\log|z'-\omega'|\}<-Bd^9(\log d)^2\log H$ and $\frac{z'}{z}=\frac{\omega'}{\omega}\in\mathbb{Q}$, with $B$ as in Proposition \ref{applgrowth2}. We may assume $\omega'\neq 0$. Let $\omega^*\in\Omega$ be non-zero and of minimum modulus lying on the line through $0$ and $\omega$. So $\omega = p \omega^*$ for some non-zero integer $p$. Let $z^*=z/p$. Then $z'$ is also an integer multiple of $z^*$, and so it suffices to show that there are few integers $p$ for which $pz^*\in\mathcal{E}$. We shall use $c_1,...,c_{11}$ to denote various positive constants which only depend on $g_1$ and $g_2$. 

We can write $\omega= k \omega_1+l\omega_2$,  with $k,l\in \mathbb{Z}$. By Lemma \ref{cosine} we have $\left|k\right|,\left|l\right|\leq c_1\left|\omega\right|$. Since $p$ divides both $k$ and $l$, we have $|p| \le c_1|\omega| \le c_1 (|z|+1) \le c_2 H^d$. And so  $\log H(z^*)\leq c_3 d\log H$. 

By Proposition \ref{growth}, $$\log\left|\sigma_\Omega(z)\right|\geq \log\left|\sigma_\Omega(z_0)\right|+c_4|z|^2\geq \log\left|z_0\right|-c_5+c_4|z|^2$$ $$=\log\left|z-\omega\right|-c_5+c_4|z|^2\geq \log\left|z^*-\omega^*\right|-c_5+c_4p^2|z^*|^2$$ where $z_0\in P$ is such that $z-z_0\in\Omega$. By Lemma \ref{measure}, $$\log\left|z^*-\omega^*\right|\geq-c_6d^4(\log d)^2d(\log H)\left|\omega^*\right|^2(1+\max\{0,\log\left|\omega^*\right|\})^3$$ $$\geq-c_7d^5(\log d)^2(\log H)\left|z^*\right|^2(1+\max\{0,\log\left|z^*\right|\})^3.$$

We then have $$d\log H\geq \log\left|\sigma_\Omega(z)\right|\geq -c_7d^5(\log d)^2(\log H)\left|z^*\right|^2(1+\max\{0,\log\left|z^*\right|\})^3-c_5+c_4p^2\left|z^*\right|^2.$$

It follows that $$p^2\leq c_8d^5(\log d)^2(\log H)(1+\max\{0,\log|z^*|\})^3\leq  c_9d^5(\log d)^2(\log H)(1+d\log H)^3,$$ and $\mathcal{E}$ has size at most 
$$
c_{10}\sqrt{d^5(\log d)^2(\log H)(1+d\log H)^3}.
$$

So when proving Theorem \ref{alggs} it suffices to consider those $z$ with $|z|\le A\sqrt{d^9(\log d)^2\log H}$. We then follow the proof of Theorem \ref{alglattice}, with this larger radius. Set $Z= 4A \sqrt{d^9(\log d)^2\log H}$. Making appropriate modifications in the choice of parameters for Proposition \ref{propn2}, there is a non-zero polynomial $P$ of degree at most $c_{11} d^{10}(\log d)^2 \log H$ such that it suffices to bound the number of zeroes of $P(z,\sigma_\Omega(z))$ with $|z| \le  A\sqrt{d^9(\log d)^2\log H}$. We apply Theorem \ref{Besson_zero} to conclude.

\section{Alternative reasoning}

We outline an alternative argument that uses reasoning from \cite{BJ2}, instead of Theorem \ref{Besson_zero}, to obtain a conclusion similar to that of Theorem \ref{alggs}, but with a slightly weaker bound. As in the previous section, we obtain constants $c_{10}, c_{11}$ and $A$ and a polynomial $P$, with degree at most $T=c_{11} d^{10}(\log d)^2 \log H$, such that, with at most $c_{10}\sqrt{d^5(\log d)^2\log H(1+d\log H)^3}$ exceptions, all $z\in \mathbb{C}$ with $\left[\mathbb{Q}(z,\sigma_\Omega(z)):\mathbb{Q}\right]\leq d$ and $H(z,\sigma_\Omega(z))\leq H$ lie in the closed disk with centre zero and radius $A\sqrt{d^9(\log d)^2\log H}$, and satisfy the equation $P(z,\sigma_\Omega(z))=0$. In the proof of Proposition 2 of \cite{Masser}, Masser provides further information about the coefficients of $P$ assuming, as we may, that there are enough points $(z,\sigma_\Omega(z))$ such that $H(z,\sigma_\Omega(z))\leq H$, $[\mathbb{Q}(z,\sigma_\Omega(z)):\mathbb{Q}]\leq d$ and $\left|z\right|\leq A\sqrt{d^9(\log d)^2\log H}$. Specifically, $P$ may be chosen to have integer coefficients, each with absolute value at most $2^{1/d}(T+1)^2H^T$.

We use reasoning from the proof of Theorem 2.5 in \cite{BJ2} to bound the number of zeroes of $P(z,\sigma_\Omega(z))$ in the closed disk with centre zero and radius $A\sqrt{d^9(\log d)^2\log H}$. Let $k$ be the degree of $Y$ in $P(X,Y)$. We get a good bound if $k=0$. Assume $k\geq 1$. We set $\tilde{P}(X,Y)=Y^kP(X,\frac{1}{Y})$, $R(X)=\tilde{P}(X,0)$, $Q(X,Y)=\tilde{P}(X,Y)-R(X)$ and $\tilde{Q}(X,Y)=\frac{1}{Y}Q(X,Y)$. These are all polynomials with integer coefficients and we have bounds for their degrees and the absolute values of their coefficients. From now on we shall use $c$ to represent various positive constants, possibly different on each occasion, depending effectively on $g_2$ and $g_3$. 

We seek some $w\in\mathbb{C}$ such that $\left|w\right|$ is not too large and $\left|P(w,\sigma_\Omega(w))\right|$ is not too small, restricting our attention to those $w$ for which $\left|\sigma_\Omega(w)\right|=M_{\sigma_\Omega}(\left|w\right|)$, where $M_{\sigma_\Omega}(s)$ denotes $\max\{\left|\sigma_\Omega(z)\right|:\left|z\right|\leq s\}$. We first find a disk around zero outside of which, for any such $w$, $\left|Q\left(w,\frac{1}{\sigma_\Omega(w)}\right)\right|\leq \frac{1}{2}$. A disk with radius $cT$ will work as, outside of that, we shall have $\left|w\right|>cT$, $$\left|\tilde{Q}\left(w,\frac{1}{\sigma_\Omega(w)}\right)\right|\leq 2^{1/d}(T+1)^4H^T\left|w\right|^T\max\{1,\left|\sigma_\Omega(w)\right|^{-T}\},$$ $$\left|\sigma_\Omega(w)\right|=M_{\sigma_\Omega}(\left|w\right|)\geq ce^{c\left|w\right|^2}$$ and $$\left|Q\left(w,\frac{1}{\sigma_\Omega(w)}\right)\right|=\left|\sigma_\Omega(w)\right|^{-1}\left|\tilde{Q}\left(w,\frac{1}{\sigma_\Omega(w)}\right)\right|.$$ 

We then choose a $w$ outside but close to this disk, with $\left|\sigma_\Omega(w)\right|=M_{\sigma_\Omega}(\left|\omega\right|)$ and $\left|R(w)\right|\geq 1$. By the Boutroux-Cartan lemma (stated as Fact 2.3 in \cite{BJ2}), there is such a $w$ in the disk with centre zero and radius $cT+T+14.$ We then have $$\left|\tilde{P}\left(w,\frac{1}{\sigma_\Omega(w)}\right)\right|\geq \frac{1}{2}$$ and so $$\left|P(w,\sigma_\Omega(w))\right|=\left|\sigma_\Omega(w)\right|^k\left|\tilde{P}\left(w,\frac{1}{\sigma_\Omega(w)}\right)\right|\geq \frac{1}{2}.$$ We are concerned with points on the disk $D_1=\{z\in\mathbb{C}:\left|z\right|\leq A\sqrt{d^9(\log d)^2\log H}\}$. We work with the disk $D_2=\{z\in\mathbb{C}:\left|z-w\right|\leq cT\}$, where this $c$ is chosen large enough to ensure that $2D_1\subseteq D_2$. We have $\max\{\left|P(z,\sigma_\Omega(z))\right|:z\in D_2\}\leq (T+1)^4H^T(cT)^Te^{cT^3}$.

By a well-known consequence of Jensen's formula (stated as Fact 2.4 in \cite{BJ2}), $P(z,\sigma_\Omega(z))$ has at most $c\log\left((T+1)^4H^T(cT)^Te^{cT^3}\right)$ zeroes in $D_2$ and hence also in $D_1$. So we obtain the conclusion of Theorem \ref{alggs} but with the weaker bound of $cd^{30}(\log d)^6(\log H)^3$.

\bibliographystyle{amsplain}
\bibliography{refs}

\end{document}